\theoremstyle{plain}
\newtheorem{theorem}{Theorem}[section]
\newtheorem{lemma}[theorem]{Lemma}
\newtheorem{proposition}[theorem]{Proposition}
\newtheorem{corollary}[theorem]{Corollary}
\newtheorem{assumption}[theorem]{Standing Assumption}
\theoremstyle{definition}
\newtheorem{definition}[theorem]{Definition}
\newtheorem{remark}[theorem]{Remark}
\newtheorem{example}[theorem]{Example}
\newcommand{\R}{\mathbb{R}}
\newcommand{\Z}{\mathbb{Z}}
\newcommand{\N}{\mathbb{N}}
\newcommand{\B}{\mathcal{B}}
\DeclareMathOperator{\Var}{Var}
\def\CMM{{\mathcal C\mathcal M\mathcal M}}
\renewcommand{\L}{\mathcal{L}}
\renewcommand{\dot}{\bullet}
\newcommand{\taboo}[4]{\tensor*[_{#2}]{#1}{_{#3}^{#4}}}
\newcolumntype{M}[1]{>{\centering}m{#1\textwidth}}
\titleformat{\section}[hang]{\sc\filcenter}{\thesection.}{1em}{}
\titleformat{\subsection}[hang]{\bfseries\filcenter}{\thesubsection.}{1em}{}
\begin{document}

\title[Infimum of Lipschitz Constants]{The Infimum of Lipschitz Constants in the Conjugacy Class of an Interval Map}
\author{Jozef Bobok}
\address{Czech Technical University in Prague, FCE, Th\'{a}kurova 7, 166 29 Praha 6, Czech Republic}
\email{jozef.bobok@cvut.cz}
\urladdr{http://mat.fsv.cvut.cz/bobok/}
\author{Samuel Roth}
\address{Silesian University in Opava, Na Rybni\v{c}ku 626/1, 746 01 Opava, Czech Republic}
\email{samuel.roth@math.slu.cz}

\subjclass[2010]{Primary: 37E05, Secondary: 26A16, 37B40}
\keywords{interval map, Lipschitz constant, topological entropy, countable Markov shift}

\begin{abstract}
How can we interpret the infimum of Lipschitz constants in a conjugacy class of interval maps?  For positive entropy maps, the exponential of the topological entropy gives a well-known lower bound.  We show that for piecewise monotone interval maps as well as for $C^{\infty}$ interval maps, these two quantities are equal, but for countably piecewise monotone maps, the inequality can be strict.  Moreover, in the topologically mixing and Markov case, we characterize the infimum of Lipschitz constants as the exponential of the Salama entropy of a certain reverse Markov chain associated with the map.  Dynamically, this number represents the exponential growth rate of the number of iterated preimages of nearly any point.
\end{abstract}


\maketitle


\section{Introduction}\label{sec:intro}

There is a long history in interval dynamics relating Lipschitz constants with topological entropy.  Under suitable assumptions on the continuous map $f:[0,1]\to[0,1]$, and letting $\lambda=\exp h(f)$ denote the exponential of the topological entropy of $f$, there are constructions for
\begin{itemize}
\item a conjugate map\footnote{In fact, $g$ is piecewise linear with slope $\pm\lambda$ on each piece.} $g$ with Lipschitz constant $\lambda$,\\ \strut \hfill - Parry~\cite{P}, for $f$ piecewise monotone and transitive.
\item a factor map\footnote{Again with ``constant slope'' $\pm\lambda$.  By sacrificing transitivity, Milnor and Thurston obtain a (nondecreasing) semiconjugacy in place of Parry's conjugacy.} $g$ with Lipschitz constant $\lambda$,\\ \strut \hfill - Milnor-Thurston~\cite{MT}, for $f$ piecewise monotone, $h(f)>0$,
\item an extension map\footnote{Here, $g$ need not be piecewise linear.  The extension is nondecreasing in the sense that $\phi\circ g=f \circ\phi$ for some nondecreasing map $\phi:[0,1]\to[0,1]$} $g$ with Lipschitz constant $\lambda$,\\ \strut \hfill - Bobok~\cite{B}, for $f$ mixing, $h(f)<\infty$, and $f(0,1)\supset \{0,1\}$.
\end{itemize}
In each construction $g$ inherits the same entropy as $f$, and we may say that its Lipschitz constant is best possible, in the sense that a positive-entropy interval map cannot have a Lipschitz constant smaller than the exponential of its entropy.

  Motivated by the above results, we propose to study the following natural conjugacy invariant for continuous interval maps:
\begin{multline}\label{mult:inf}
\Lambda(f) = \inf \{ \lambda : \textnormal{for some homeomorphism } \psi:[0,1]\to[0,1], \\ \psi\circ f\circ\psi^{-1} \textnormal{ has Lipschitz constant } \lambda \}.
\end{multline}

We work with three natural classes of maps: piecewise monotone, $C^{\infty}$, and countably piecewise monotone. For piecewise monotone maps as well as for $C^{\infty}$ maps we find $\Lambda(f)=\exp h(f)$ - Corollaries~\ref{cor:pm} and~\ref{cor:smooth} - but in the absence of transitivity the infimum need not be attained.  For countably piecewise monotone maps, we assume topological mixing and the presence of a countable Markov partition. Then $\Lambda(f)$ is characterized as the growth rate  $\limsup_{n\to\infty} \sqrt[n]{\#f^{-n}(\{x\})}$ of the number of preimages of an arbitrary point $x$, provided $x$ is not an accumulation point of the Markov partition set - Theorem \ref{th:preimages}. We show by example that this number may be strictly larger than the entropy - Example~\ref{ex:gap}. Nevertheless, if $f$ is locally eventually onto or $C^1$ smooth, then once again $\Lambda(f)=\exp h(f)$ - Theorem \ref{th:leo}.

\section{Preliminary observations}

The first observation to record is that topological entropy provides a natural lower bound for Lipschitz constants.

\begin{proposition}[\cite{KH}]\label{prop:lipbound}
Suppose $f:[0,1]\to[0,1]$ has Lipschitz constant $\lambda$ and topological entropy $h(f)$.  Then
\begin{equation*}
h(f)\leq\max\{0,\log\lambda\}.
\end{equation*}
\end{proposition}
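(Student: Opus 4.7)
The plan is to prove this via the separated-sets definition of topological entropy: $h(f)=\lim_{\epsilon\to 0^+}\limsup_{n\to\infty}\frac{1}{n}\log s(n,\epsilon)$, where $s(n,\epsilon)$ is the maximum cardinality of a set in $[0,1]$ that is $(n,\epsilon)$-separated with respect to the Bowen metric $d_n(x,y)=\max_{0\leq k<n}|f^k(x)-f^k(y)|$. The entire argument is essentially a volume/covering estimate that uses Lipschitz control of iterates.

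First I would record the basic fact that the Lipschitz constant of $f^n$ is at most $\lambda^n$, which follows from iterating the Lipschitz inequality. Next, I would convert Bowen separation into ordinary Euclidean separation. Suppose $x,y\in[0,1]$ satisfy $d_n(x,y)>\epsilon$, so that $|f^k(x)-f^k(y)|>\epsilon$ for some $0\leq k<n$. Since $f^k$ has Lipschitz constant $\lambda^k$, we must have $|x-y|>\epsilon/\lambda^k\geq \epsilon/\max\{1,\lambda^{n-1}\}$. Consequently any $(n,\epsilon)$-separated subset of $[0,1]$ consists of points that are pairwise at Euclidean distance at least $\epsilon/\max\{1,\lambda^{n-1}\}$, so its cardinality is at most $\max\{1,\lambda^{n-1}\}/\epsilon+1$.

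From this estimate I would split into two cases. If $\lambda\leq 1$, the bound becomes $s(n,\epsilon)\leq 1/\epsilon+1$, independent of $n$, so $\frac{1}{n}\log s(n,\epsilon)\to 0$ and hence $h(f)=0$, matching the right-hand side $\max\{0,\log\lambda\}=0$. If $\lambda>1$, the bound gives
\begin{equation*}
\frac{1}{n}\log s(n,\epsilon)\leq \frac{1}{n}\log\!\left(\frac{\lambda^{n-1}}{\epsilon}+1\right),
\end{equation*}
whose $\limsup$ as $n\to\infty$ is $\log\lambda$ for each fixed $\epsilon>0$. Letting $\epsilon\to 0^+$ then yields $h(f)\leq\log\lambda=\max\{0,\log\lambda\}$.

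There is no real obstacle here; the only point requiring a little care is the passage from Bowen separation to Euclidean separation via the correct power of $\lambda$, and making sure the argument works uniformly when $\lambda\leq 1$ (where the right substitute is simply that $f^n$ is itself $1$-Lipschitz, so $(n,\epsilon)$-separation reduces to $\epsilon$-separation in the Euclidean metric). Since the reference \cite{KH} already contains the general statement for Lipschitz maps on compact metric spaces, one could alternatively cite it directly, but the one-dimensional proof sketched above is short enough to include explicitly.
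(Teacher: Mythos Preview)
Your argument is correct and is essentially the standard separated-set computation found in the cited reference \cite{KH}. The paper itself does not supply a proof of this proposition; it simply states the result with attribution to \cite{KH}, so there is nothing further to compare.
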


\begin{assumption}\label{ass}
We ignore interval maps $f:[0,1]\to[0,1]$ for which $\cap_{i=0}^\infty f^i([0,1])$ is a singleton.
\end{assumption}

Under this assumption, the following facts hold.
\begin{itemize}
\item $f$ cannot have Lipschitz constant $\lambda<1$.  Otherwise it would be a uniform contraction, with a unique fixed point attracting the entire interval.
\item Consequently, the conclusion of Proposition~\ref{prop:lipbound} simplifies to $h(f)\leq\log\lambda$.
\end{itemize}

\begin{corollary}\label{cor:lipbound}
Under our standing assumption, $\Lambda(f)\geq \exp h(f)$.
\end{corollary}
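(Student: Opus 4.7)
The plan is to observe that this corollary is essentially a direct consequence of Proposition~\ref{prop:lipbound} together with the standing assumption, once we verify that the relevant hypotheses pass through a topological conjugacy.

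First I would fix any homeomorphism $\psi:[0,1]\to[0,1]$ and set $g=\psi\circ f\circ\psi^{-1}$, supposing that $g$ has some Lipschitz constant $\lambda$. The goal is to show $\lambda\geq\exp h(f)$. Since conjugacy by a homeomorphism preserves topological entropy, we have $h(g)=h(f)$. Next I would check that $g$ satisfies the standing assumption whenever $f$ does: the intersection $\cap_{i=0}^\infty g^i([0,1])$ equals $\psi\big(\cap_{i=0}^\infty f^i([0,1])\big)$, and $\psi$ is a bijection, so it is a singleton for $g$ if and only if it is one for $f$. Thus the simplified form of Proposition~\ref{prop:lipbound} stated just above the corollary applies to $g$ and gives $h(g)\leq\log\lambda$.

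Combining these two observations yields $\log\lambda\geq h(f)$, i.e.\ $\lambda\geq\exp h(f)$, for every $\lambda$ appearing in the infimum defining $\Lambda(f)$ in~\eqref{mult:inf}. Passing to the infimum completes the argument.

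There is no real obstacle here; the only subtlety worth flagging is the verification that the standing assumption is itself a conjugacy invariant, so that one is entitled to use the strengthened bound $h(g)\leq\log\lambda$ rather than the weaker $h(g)\leq\max\{0,\log\lambda\}$ and thereby also rule out Lipschitz constants $\lambda<1$ in the infimum. This is what prevents the trivial lower bound $\Lambda(f)\geq 0$ and delivers the claimed sharper inequality.
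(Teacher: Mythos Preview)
Your proof is correct and follows the same approach as the paper, which leaves the corollary as an immediate consequence of the two bullet points preceding it. You have in fact been slightly more careful than the paper by explicitly noting that the standing assumption is a conjugacy invariant, so that the simplified bound $h(g)\leq\log\lambda$ applies to the conjugate map $g$ and not merely to $f$.
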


\begin{example}
The maps $f(x)=x^2$ and $g(x)=\sqrt{x}$, $x\in [0,1]$, are in the same conjugacy class, $g$ is not Lipschitz. At the same time  $\Lambda(f)=\Lambda(g)=1$ is not attained. The conjugate maps with Lipschitz constants close to 1 are $g_t(x)=x^t$, $t>1$.  Explicitly, $\psi_t\circ f = g_t\circ \psi_t$, where $\psi_t(x)=e^{-(\ln 1/x)^{\log_2 t}}$ (with values at the endpoints given by continuity).
\end{example}

\begin{example}
Let $g:[0,1]\to [0,1]$ be a continuous cocountably $\infty$-fold map, i.e., a map for which $\#g^{-1}(y)=\infty$ for all but countably many points $y$ in $[0,1]$ (for such a map see \cite[Section 5]{Bo05}). For $a,b\in (0,1)$ and a positive $\varepsilon$ satisfying $b+\varepsilon<a<a+\varepsilon<1$ consider a continuous map $f:[0,1]\to [0,1]$ defined as: $f(x)=b+\varepsilon g(\frac{1}{\varepsilon}(x-a))$ for $x$ in $[a,a+\varepsilon]$, with $f$ affine on the intervals $[0,a]$ and $[a+\varepsilon,1]$. Then for any homeomorphism $\psi$ of the interval $[0,1]$, the map $h_{\psi}=\psi\circ f\circ\psi^{-1}$ is cocountably $\infty$-fold with respect to $\psi([b,b+\varepsilon])$, i.e., $\#h_{\psi}^{-1}(y)=\infty$ for all but countably many points $y$ in $\psi([b,b+\varepsilon])$. Since any Lipschitz interval map has (Lebesgue) almost all preimage sets finite \cite{Ce1}, it means that $h_{\psi}$ is not Lipschitz  for any $\psi$ hence $\Lambda(f)=\infty$. The equality $h(f)=0$ is clear.
\end{example}


\section{When growth rate of variation equals topological entropy}\label{sec:pm}

This section addresses two natural classes of interval maps -- piecewise monotone maps and $C^\infty$ maps -- using the tool of total variation.  We do not require transitivity of our maps, nor do we assume any Markov conditions.

\begin{definition}
The \emph{total variation} of a real-valued function $f:[a,b]\to\R$ is
\begin{equation*}
\Var f := \sup \sum_{i=0}^{s-1}|f(p_{i+1})-f(p_i)|,
\end{equation*}
where the supremum is taken over all finite sequences $p_0 < p_1 < \cdots < p_s$ of elements of the domain $[a,b]$.
\end{definition}

\begin{lemma}\label{lem:var}
Let $f:[0,1]\to\mathbb R$ be continuous, $J\subset[0,1]$ an interval, and $n\in\N$.  Then $Var f^n|_{f(J)} \leq Var f^{n+1}|_{J}$.
\end{lemma}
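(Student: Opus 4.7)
My plan is to reduce the inequality to a pointwise comparison of preimage counts using the Banach indicatrix theorem: for any continuous $h:K\to\R$ on a compact interval $K$,
\begin{equation*}
\Var h|_K = \int_{\R} N(y,h,K)\, dy,
\end{equation*}
where $N(y,h,K) := \#\{x\in K : h(x)=y\}$, with both sides possibly equal to $+\infty$. The point is that this formula converts the supremum-over-partitions definition of variation into an integral that decomposes naturally under composition with $f$.

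First I would pass to the closed interval $\bar J$; since $f$ and its iterates are continuous, the variation is unchanged because any partition of $\bar J$ can be approximated by one whose endpoints lie in $J$, and $\overline{f(J)}=f(\bar J)$ by continuity and compactness. So we may assume $J$ is compact. Applying Banach's formula to each side of the lemma reduces it to the pointwise indicatrix inequality
\begin{equation*}
N(y,f^n,f(J))\ \leq\ N(y,f^{n+1},J) \quad\text{for all }y\in\R,
\end{equation*}
after which integration in $y$ yields the result.

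To prove the indicatrix inequality, suppose $z_1,\dots,z_k$ are distinct points of $f(J)$ with $f^n(z_j)=y$. For each $j$, pick $x_j\in J$ with $f(x_j)=z_j$, which is possible precisely because $z_j\in f(J)$. The $x_j$ are distinct since their images $z_j$ are distinct, and $f^{n+1}(x_j)=f^n(f(x_j))=f^n(z_j)=y$, so each $x_j$ contributes to $N(y,f^{n+1},J)$. Hence the right-hand indicatrix is at least $k$, the left-hand value.

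I do not anticipate a serious obstacle. The only delicate point is the infinite-variation case, but Banach's formula handles it automatically: if the left-hand integral is $+\infty$, the pointwise inequality forces the right-hand integral to be $+\infty$ as well. The closure reduction is routine for continuous functions, and the preimage-picking step uses nothing beyond the definition of $f(J)$.
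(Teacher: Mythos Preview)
Your argument via the Banach indicatrix is correct. The reduction to closed intervals is fine (continuity gives $\overline{f(J)}=f(\bar J)$ and leaves variation unchanged), and the pointwise inequality $N(y,f^n,f(J))\le N(y,f^{n+1},J)$ follows exactly as you say: every $z\in f(J)$ with $f^n(z)=y$ lifts to some $x\in J$ with $f(x)=z$, and distinct $z$'s give distinct $x$'s. Integrating yields the lemma, and the infinite-variation case needs no separate treatment.

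The paper itself supplies no argument---it simply declares the lemma ``a standard exercise in real analysis''---so there is no approach to compare against. If anything, the phrase suggests the authors had in mind the more elementary route that avoids Banach's theorem: given a partition $p_0<\cdots<p_s$ of $\overline{f(J)}$, pick $c,d\in\bar J$ with $f(c)=p_0$, $f(d)=p_s$ (say $c<d$), and use the intermediate value theorem to find $c=q_0<q_1<\cdots<q_s=d$ in $\bar J$ with $f(q_i)=p_i$; then
\[
\sum_i\bigl|f^n(p_{i+1})-f^n(p_i)\bigr|=\sum_i\bigl|f^{n+1}(q_{i+1})-f^{n+1}(q_i)\bigr|\le \Var f^{n+1}\big|_{\bar J}.
\]
Your indicatrix proof is arguably cleaner, since it sidesteps the ordering issue entirely and handles infinite variation transparently; the IVT argument is more self-contained but requires the inductive selection of the $q_i$ to keep them monotone. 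Either route is adequate here.
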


\begin{proof}
This is a standard exercise in real analysis.
\end{proof}

Combining the notions of total variation and iteration, we obtain

\begin{definition}
The \emph{growth rate of variation} of an interval map $f$ is defined to be
\begin{equation*}
\nu(f):=\limsup_{n\to\infty} \sqrt[n]{\Var f^n}.
\end{equation*}
\end{definition}




Now we present the main results of this section.

\begin{theorem}\label{th:pm}
Let $f$ be a continuous interval map.  Then $\Lambda(f)\leq\nu(f)$.
\end{theorem}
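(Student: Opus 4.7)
The plan is, for any $\lambda > \nu(f)$, to construct a conjugating homeomorphism $\psi$ under which $f$ becomes $\lambda$-Lipschitz; sending $\lambda \searrow \nu(f)$ then yields the theorem. If $\nu(f)=\infty$ there is nothing to prove, so assume $\nu(f) < \infty$, ensuring $\Var f^n \leq C\lambda_0^n$ for some $\lambda_0 < \lambda$ and all large $n$. The natural tool is an ``adapted length'' that weights all iterates of $f$ simultaneously: for $0 \leq x \leq y \leq 1$ define
\[
L(x,y) := \sum_{n=0}^{\infty} \lambda^{-n}\, \Var(f^n|_{[x,y]}),
\]
and then set $\psi(x) := L(0,x)/L(0,1)$.

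I would first verify that $L$ is a well-defined additive length and that $\psi$ is a homeomorphism. Convergence of the series follows from the geometric bound above. Termwise additivity of variation on consecutive subintervals yields $L(x,y) + L(y,z) = L(x,z)$ for $x \leq y \leq z$. The $n=0$ summand is simply $y-x$, so $L(x,y) \geq y-x$, which makes $\psi$ continuous, strictly increasing, and surjective onto $[0,1]$, hence a homeomorphism.

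The heart of the argument is the contraction estimate $L(f(x),f(y)) \leq \lambda L(x,y)$. By the intermediate value theorem, $f([x,y])$ is an interval containing both $f(x)$ and $f(y)$, hence containing the interval $I$ with those endpoints. Combining this inclusion with Lemma~\ref{lem:var} yields
\[
\Var f^n|_{I} \leq \Var f^n|_{f([x,y])} \leq \Var f^{n+1}|_{[x,y]}.
\]
Multiplying by $\lambda^{-n}$, summing, and reindexing,
\[
L(f(x),f(y)) \leq \sum_{n=0}^{\infty} \lambda^{-n}\Var f^{n+1}|_{[x,y]} = \lambda \sum_{m=1}^{\infty} \lambda^{-m}\Var f^{m}|_{[x,y]} \leq \lambda L(x,y).
\]
Dividing by $L(0,1)$ produces $|\psi(f(x))-\psi(f(y))| \leq \lambda\,|\psi(x)-\psi(y)|$, so $\psi\circ f\circ\psi^{-1}$ is $\lambda$-Lipschitz and $\Lambda(f) \leq \lambda$.

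The one wrinkle I anticipate is the possibility that $\Var f^n$ is infinite for some initial $n$ even though $\nu(f) < \infty$; in that case the sum must be started at a larger index $N_0$ and a separate $(y-x)$ term retained to preserve strict monotonicity of $\psi$. Outside of this bookkeeping concern, the argument is the formal manipulation above, with Lemma~\ref{lem:var} and the intermediate value theorem doing the substantive work.
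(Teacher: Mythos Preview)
Your proposal is correct and is essentially the paper's own proof: your $L(0,x)$ is exactly the paper's $\phi(x)$ (with $\lambda=\nu(f)+\epsilon$), normalized in advance rather than rescaled at the end, and the key Lipschitz estimate via Lemma~\ref{lem:var} together with $[f(x);f(y)]\subset f([x;y])$ is identical. One small remark: the inequality $L(x,y)\geq y-x$ gives strict monotonicity but not continuity of $\psi$; continuity comes instead from the uniform convergence of the series (Weierstrass $M$-test with the same geometric majorant), which is how the paper argues it.
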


\begin{corollary}\label{cor:pm}
If $f$ is piecewise monotone, then $\Lambda(f)=\exp h(f)$.
\end{corollary}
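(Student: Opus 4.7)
The plan is to combine the two tools already available. Corollary~\ref{cor:lipbound} gives the lower bound $\Lambda(f)\geq\exp h(f)$ under the standing assumption, and Theorem~\ref{th:pm} gives the upper bound $\Lambda(f)\leq\nu(f)$. So it suffices to prove $\nu(f)\leq\exp h(f)$ for piecewise monotone $f$; then the two inequalities collapse into the desired equality.

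To bound $\nu(f)$, I would exploit the piecewise monotone structure at every iterate. Let $\ell(f^n)$ denote the lap number of $f^n$, that is, the number of maximal subintervals of $[0,1]$ on which $f^n$ is monotone. On each such lap $J$ the restriction $f^n|_J$ is monotone, so
\begin{equation*}
\Var(f^n|_J)=|f^n(\sup J)-f^n(\inf J)|\leq 1,
\end{equation*}
since $f^n([0,1])\subseteq[0,1]$. Summing over the finitely many laps yields $\Var f^n\leq\ell(f^n)$. Taking $n$th roots and the $\limsup$,
\begin{equation*}
\nu(f)\leq\limsup_{n\to\infty}\sqrt[n]{\ell(f^n)}=\exp h(f),
\end{equation*}
where the last equality is the classical Misiurewicz--Szlenk formula for topological entropy of a piecewise monotone interval map.

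There is no substantive obstacle here; the real work was packaged into Theorem~\ref{th:pm} and the Misiurewicz--Szlenk theorem, and the intermediate variation estimate is immediate from monotonicity on each lap. The only fine point to mention is that Corollary~\ref{cor:lipbound} requires the standing assumption, but the piecewise monotone maps that violate it are exactly those whose iterates ultimately collapse to a single point, and these are excluded from our framework.
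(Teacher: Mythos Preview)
Your proof is correct and follows essentially the same route as the paper: combine Corollary~\ref{cor:lipbound}, Theorem~\ref{th:pm}, and the Misiurewicz--Szlenk theorem. The only cosmetic difference is that the paper quotes the Misiurewicz--Szlenk result in the form $h(f)=\max\{0,\log\nu(f)\}$ (together with the observation $\nu(f)\geq1$ from the standing assumption) to get $\nu(f)=\exp h(f)$ directly, whereas you quote it in the lap-number form and supply the elementary bound $\Var f^n\leq\ell(f^n)$ yourself; both amount to the same thing.
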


\begin{corollary}\label{cor:smooth}
If $f$ is $C^\infty$ smooth, then $\Lambda(f)=\exp h(f)$.
\end{corollary}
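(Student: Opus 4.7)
The plan is to close the gap between Corollary~\ref{cor:lipbound} and Theorem~\ref{th:pm} by invoking Yomdin's theorem on entropy and volume growth. Corollary~\ref{cor:lipbound} gives $\Lambda(f)\geq \exp h(f)$, while Theorem~\ref{th:pm} gives $\Lambda(f)\leq\nu(f)$. Thus it suffices to prove the single inequality $\nu(f)\leq \exp h(f)$ under the $C^\infty$ hypothesis, and all three inequalities will chain together to force equality throughout.

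The key identity for $C^1$ interval maps is
\begin{equation*}
\Var f^n=\int_0^1 \lvert (f^n)'(x)\rvert \, dx,
\end{equation*}
which realizes $\Var f^n$ as the length-with-multiplicity of the image under $f^n$ of the identity curve $\sigma(t)=t$ on $[0,1]$. Yomdin's theorem for $C^\infty$ maps bounds the exponential growth rate of such length-with-multiplicity quantities by the topological entropy: specifically, for any smooth curve $\sigma$,
\begin{equation*}
\limsup_{n\to\infty}\frac{1}{n}\log \operatorname{length}(f^n\circ\sigma)\leq h(f).
\end{equation*}
Applied to the identity curve, this is precisely $\limsup_n\tfrac{1}{n}\log \Var f^n\leq h(f)$, i.e.\ $\nu(f)\leq \exp h(f)$.

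The main obstacle is the correct invocation of Yomdin's theorem. The original result is formulated for $C^\infty$ maps of a compact manifold and bounds the $k$-volume growth rate of the image of any smooth $k$-disc by $h(f)$. In our one-dimensional setting this specializes to the claimed total-variation estimate, but one must verify that (i) the full $C^\infty$ assumption (rather than merely $C^r$ for some finite $r$) is indeed used in order to get the bound at rate $h(f)$ with no defect term, and (ii) the endpoint behavior of $[0,1]$ causes no issue -- most cleanly handled by embedding $f$ into a $C^\infty$ self-map of the circle or by working with a smooth curve whose image lies in the interior of $[0,1]$ away from the endpoints. Beyond this single ingredient, the proof of the corollary is a one-line chaining of previously established inequalities.
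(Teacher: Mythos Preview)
Your proposal is correct and follows essentially the same route as the paper: both combine Corollary~\ref{cor:lipbound}, Theorem~\ref{th:pm}, and Yomdin's theorem to sandwich $\Lambda(f)$ between $\exp h(f)$ and $\nu(f)$ and then identify these bounds. The only organizational difference is that the paper quotes the two-sided result $h(f)=\max\{0,\log\nu(f)\}$ from Yomdin's work and then invokes Standing Assumption~\ref{ass} to rule out $\nu(f)<1$, whereas you extract only the one-sided inequality $\nu(f)\leq\exp h(f)$ that is actually needed; your caveats about endpoints and the $C^\infty$ vs.\ $C^r$ defect are apt but are not discussed in the paper's brief treatment.
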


\begin{proof}[Proof of corollaries]
If $f$ is piecewise monotone or $C^\infty$ smooth, then the entropy is given by
$
h(f)=\max\{0,\log\nu(f)\}.
$
For piecewise monotone maps this result is due to Misiurewicz and Szlenk \cite{ALM, MS}.  For $C^\infty$ maps it follows from the work of Yomdin \cite{Y}.

Standing assumption~\ref{ass} gives $\nu(f)\geq1$, since $\Var(f^n)$ is bounded away from zero by the length of the interval $\cap_{i=0}^\infty f^i([0,1])$.  Now apply Corollary~\ref{cor:lipbound} and Theorem~\ref{th:pm}.
\end{proof}

\begin{proof}[Proof of Theorem~\ref{th:pm}]
We may assume $\nu(f)$ is finite, or there is nothing to prove. We construct interval maps conjugate to $f$ with Lipschitz constants arbitrarily close to $\nu(f)$. Let
\begin{equation}\label{phi}
\phi(x):=\sum_{n=0}^\infty \frac{\Var f^n|_{[0,x]}}{(\nu(f)+\epsilon)^n}.
\end{equation}
We claim that $\phi(1)<\infty$, $\phi$ is a homeomorphism onto its image, and $\phi \circ f \circ \phi^{-1}$ has Lipschitz constant $\nu(f)+\epsilon$.

By the definition of $\nu$, we have $\Var f^n \leq (\nu(f)+\frac{\epsilon}{2})^n$ for all $n$ greater than or equal to some $N\in\N$.  Applying the comparison test with the geometric series
\begin{equation}\label{comp}
\sum_{n=N}^\infty \frac{(\nu(f)+\frac{\epsilon}{2})^n}{(\nu(f)+\epsilon)^n},
\end{equation}
we conclude that $\phi(1)$ is finite.

We have a trivial inequality $\Var f^n|_{[0,x]} \leq \Var f^n$ for all $x\in[0,1]$.  Therefore we can use the same geometric series~\eqref{comp} in the Weierstrass M-test to conclude that the series in~\eqref{phi} converges uniformly.  Then $\phi$, being a uniform limit of continuous functions, is continuous.

Since $f^0$ is the identity function, we have $\Var f^0|_{[0,x]}=x$, and we may write $\phi(x)=x+\sum_{n=1}^\infty \frac{\Var f^n|_{[0,x]}}{(\nu(f)+\epsilon)^n}$.  Now it is clear that $\phi$ is strictly monotone increasing.

The last three paragraphs combined show that $\phi:[0,1]\to[0,\phi(1)]$ is a homeomorphism onto its image.  Therefore we can form the composition $g=\phi\circ f\circ\phi^{-1}$.

If $x,y\in[0,1]$, we will use the notation $[x;y]$ for the interval with endpoints $x$ and $y$ regardless of their ordering.  By~\eqref{phi} we have
\begin{align*}
&\Big|g(\phi(y)) - g(\phi(x))\Big| =
\Big|\phi(f(y))-\phi(f(x))\Big| =\\
&=\sum_{n=0}^\infty \frac{Var f^n|_{[f(x);f(y)]}}{(\nu(f)+\epsilon)^n} \leq \sum_{n=0}^\infty \frac{Var f^n|_{f([x;y])}}{(\nu(f)+\epsilon)^n}\cdots,
\end{align*}
which by Lemma~\ref{lem:var} becomes
\begin{equation*}
\cdots  \leq
\sum_{n=0}^\infty \frac{Var f^{n+1}|_{[x;y]}}{(\nu(f)+\epsilon)^n} =
(\nu(f)+\epsilon) \sum_{n=1}^\infty \frac{Var f^n|_{[x;y]}}{(\nu(f)+\epsilon)^n} \leq \cdots
\end{equation*}
Adding back a term with index zero and again applying~\eqref{phi}, we get
\begin{equation*}
\cdots \leq (\nu(f)+\epsilon) \sum_{n=0}^\infty \frac{Var f^n|_{[x;y]}}{(\nu(f)+\epsilon)^n} = (\nu(f)+\epsilon) \Big|\phi(y)-\phi(x)\Big|.
\end{equation*}
Since $\phi:[0,1]\to[0,\phi(1)]$ is surjective, this shows that $g$ has Lipschitz constant $\nu(f)+\epsilon$.  Finally, by rescaling $g$, (conjugating by a linear homeomorphism $[0,\phi(1)]\to[0,1]$), we get a map on the unit interval $[0,1]$ conjugate to $f$ with Lipschitz constant $\nu(f)+\epsilon$.

\end{proof}

\section{Topologically mixing Maps with a Countable Markov Partition}\label{sec:cpmm}

We turn our attention now to continuous interval maps which are topologically mixing and countably Markov.

\emph{Topological mixing} of $f$ means that for each pair of nonempty open sets $U,V$, there is $n_0\geq0$ such that $f^n(U)\cap V\neq\emptyset$ for every $n\ge n_0$.  For topologically mixing interval maps, our Standing Assumption~\ref{ass} is redundant and may be dropped.

%

An interval map $f$ is said to be \emph{countably Markov} if there is a closed and countable (or finite) set $P$, $0,1\in P$, which is forward invariant $f(P)\subset P$, and such that $f|_I$ is monotone on each component $I$ of $[0,1]\setminus P$.  Such a set $P$ will be called a \emph{partition set} for $f$.  We denote by $\B(P)$ the set of all components of $[0,1]\setminus P$, and we call these components the \emph{partition intervals}.

\begin{definition}
By $\CMM$ we denote the class of continuous interval maps which are countably Markov and topologically mixing.
\end{definition}

We make two remarks.  First, a map $f\in\CMM$ generally admits many distinct partition sets.  Second, countably infinite Markov partition sets can also be useful for studying some (finitely) piecewise monotone maps.

The basic properties of Markov partitions as regards iteration are summarized in the next lemma.

\begin{lemma}\label{lem:refine}Suppose $f\in \CMM$ with a partition set $P$.
  Then
\begin{enumerate}[label=\textnormal{(\roman*)}]
\item\label{it:mar} $P^n:=\cup_{i=0}^n f^{-i}(P)$ defines a Markov partition for $f^j$, provided $j\leq n+1$.
\item\label{it:int} The partition intervals of $P^n$, $n\geq0$, are the nonempty open intervals of the form
\begin{equation*}
[I_0I_1\cdots I_n]:=I_0\cap f^{-1}I_1 \cap \cdots \cap f^{-n}I_n, \quad \text{where } I_0,\ldots,I_n \in\mathcal{B}(P).
\end{equation*}
\item\label{it:hom} For each $[I_0\cdots I_n]\neq\emptyset$, the restricted map $f^n|_{[I_0\cdots I_n]}:[I_0\cdots I_n] \to [I_n]$ is a homeomorphism.
\item\label{it:forw} The set of accumulation points of $P$ is forward invariant, $f(\textnormal{Acc }P) \subset \textnormal{Acc }P$.
\item\label{it:dense} $Q:=\cup_{n=0}^\infty P^n = \cup_{i=0}^\infty f^{-i}(P)$ is dense in $[0,1]$.
\end{enumerate}
\end{lemma}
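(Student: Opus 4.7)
The plan is to verify each of the five items in sequence. Items (i)--(iii) are bookkeeping consequences of the monotonicity of $f$ on the partition intervals of $P$, while (iv) and (v) require genuine use of topological mixing, with (iv) being the main technical obstacle.

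For (i), closure, countability, and $\{0,1\}\subset P^n$ are immediate from the hypotheses and continuity of $f$. Forward invariance $f^j(P^n)\subset P^n$ for $j\leq n+1$ is an index calculation: $f^j(f^{-i}(P))$ lies in either $f^{j-i}(P)\subset P$ (when $j\geq i$, iterating $f(P)\subset P$) or $f^{-(i-j)}(P)\subset P^n$ (when $j<i$, using $i-j\leq n$). Monotonicity of $f^j$ on a component $J$ of $[0,1]\setminus P^n$ follows because each $f^i(J)$ for $0\leq i\leq n$ avoids $P$ and so lies in a single partition interval of $P$, on which $f$ is monotone; composing for $i=0,\ldots,j-1$ yields monotonicity of $f^j$ provided $j-1\leq n$. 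For (ii), I would use the tautology that $x\notin P^n$ iff $f^i(x)\in I_i$ for some $I_i\in\B(P)$ and each $0\leq i\leq n$; this decomposes $[0,1]\setminus P^n$ into the disjoint open sets $[I_0\cdots I_n]$, each an interval by induction on $n$ invoking the monotonicity of $f^n$ on $[I_0\cdots I_{n-1}]$ supplied by (i). For (iii), monotonicity of $f^n|_{[I_0\cdots I_n]}$ is a special case of (i); a short check shows $f^n(P^n)\subset P$, so the endpoints $a,b$ of $[I_0\cdots I_n]=(a,b)$ satisfy $f^n(a),f^n(b)\in P$. Since $f^n((a,b))\subset I_n$ with $I_n$'s endpoints in $P$ and interior disjoint from $P$, these values must coincide with the endpoints of $I_n$, giving surjectivity.

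The main obstacle is (iv). Let $x\in\textnormal{Acc }P$, so $x\in P$ by closedness, with $p_k\in P\setminus\{x\}$ and $p_k\to x$; continuity gives $f(p_k)\to f(x)\in P$. I would first use mixing to rule out local constancy of $f$: if $f$ were constant on a nonempty open $W$, then $f^k(W)$ would be a singleton for $k\geq 1$, contradicting that mixing forces $f^k(W)$ to meet every nonempty open set. Now assume for contradiction that $f(x)$ is isolated in $P$; then by continuity together with $f(P)\subset P$, all $P$-points in some neighborhood $(x-\delta,x+\delta)$ must map to $f(x)$. Because the countable closed set $P$ has isolated points accumulating at $x$ (a Cantor--Bendixson observation, since otherwise $P$ would contain a nonempty perfect, hence uncountable, subset), one can find a partition interval $(a,b)\subset(x-\delta,x+\delta)$ with $a,b\in P$; then the monotonicity of $f|_{(a,b)}$ together with $f(a)=f(b)=f(x)$ forces $f$ to be constant on $(a,b)$, contradicting the nowhere-local-constancy just established.

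For (v), let $U$ be any nonempty open subset of $[0,1]$. Mixing precludes $P=\{0,1\}$ (otherwise $f$ would be monotone on $[0,1]$, hence not mixing), so $\B(P)$ contains at least two distinct partition intervals; choose nonempty open $V_1,V_2$ in two different ones. By mixing, for $n$ large enough, $f^n(U)$ meets both $V_1$ and $V_2$; being connected, $f^n(U)$ cannot lie in a single partition interval, so $f^n(U)\cap P\neq\emptyset$. This gives $U\cap f^{-n}(P)\neq\emptyset$, and hence $U\cap Q\neq\emptyset$.
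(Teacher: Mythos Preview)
Your proof is correct and, for items (i)--(iii) and (v), follows essentially the same line as the paper. One small point to tighten in (i): countability of $P^n$ is not quite ``immediate from continuity,'' since a priori $f$ is only monotone, not strictly monotone, on partition intervals, and a constant stretch would give uncountable preimages. The paper handles this upfront by noting that mixing forces strict monotonicity on each partition interval (equivalently, your no-local-constancy observation from (iv)); you might move that observation earlier so that (i) and the surjectivity step in (iii) (which implicitly needs $f^n(a)\neq f^n(b)$) are self-contained.

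For (iv), your route differs from the paper's. The paper argues directly: one can choose the approximating sequence $x_k\to x$ in $P$ so that each consecutive pair $x_{2k},x_{2k+1}$ are the two endpoints of a common partition interval; strict monotonicity from mixing gives $f(x_{2k})\neq f(x_{2k+1})$, so infinitely many of the images $f(x_k)\in P$ are distinct from $f(x)$, and $f(x)\in\textnormal{Acc }P$. Your argument by contradiction, locating a full partition interval $(a,b)$ near $x$ on which $f(a)=f(b)$ forces constancy, is also valid; the paper's version is a bit shorter and avoids the perfect-set detour, while yours makes the role of the countability of $P$ more explicit. Incidentally, the Cantor--Bendixson step is not really needed: once you have any $p\in P\cap(x-\delta,x)$, countability of $P$ gives some $y\in(p,x)\setminus P$, and the partition interval containing $y$ already lies in $[p,x]\subset(x-\delta,x+\delta)$.
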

\begin{proof} The ideas here are essentially the same as in the theory of finite Markov partitions.  We include the argument for the sake of completeness.
\begin{enumerate}[label=(\roman*)]
\item Clearly $P^n$ is closed, contains $0$ and $1$ and is forward invariant.  Since $f$ is topologically mixing, it must be \emph{strictly} monotone on each of its partition intervals, and therefore the preimage of any singleton is countable.  It follows that $P^n$ is countable. If $U$ is a component of $[0,1]\setminus P^n$, then the sets $U$, $f(U)$, \ldots, $f^{n}(U)$ contain no point from $P$.  Thus for $j\leq n+1$, $f^j|_U$ is a composition of monotone maps, and so is monotone.
\item If $[I_0\cdots I_{n-1}]$ is an interval, then so is $[I_0\cdots I_n]=\left(f^n|_{[I_0\cdots I_{n-1}]}\right)^{-1}(I_n)$, by \ref{it:mar}.  Using induction, we see that all nonempty sets of the form $[I_0\cdots I_n]$ are intervals.  If $x$ belongs to some interval $[I_0\cdots I_n]$, then $f^n(x)$ belongs to $I_n$, hence does not belong to $P$, and therefore $x\notin P^n$.  Thus each interval $[I_0\cdots I_n]$ contains no points from $P^n$, and so is contained in one of the component intervals of $[0,1]\setminus P^n$.  Conversely, if $U$ is a component of $[0,1]\setminus P^n$, then it is contained in the interval $[I_0\cdots I_n]$, where $I_i$ is taken to be the member of $\B(P)$ containing $f^i(U)$, $i=0,\ldots,n$.  The claim follows.
\item By \ref{it:mar} and \ref{it:int}, $f^n$ maps $[I_0\cdots I_{n}]$ monotonically into $I_n$.  It remains to prove surjectivity.  But since $[I_0\cdots I_{n}]$ is a partition interval of $P^n$, its endpoints are in $P^n$ and so their images under $f^n$ are in $P$.
\item Suppose $x\in P$ is a limit of a sequence of points $x_n$ from $P$.  We can choose the sequence $x_n$ using only endpoints of partition intervals and in such a way that $x_{2n}, x_{2n+1}$ are the two endpoints of a common partition interval.  Then the points $f(x_n)$ also belong to $P$ and converge to $f(x)$.  Moreover, topological mixing gives $f(x_{2n})\neq f(x_{2n+1})$.  Therefore infinitely many of the points $f(x_n)$ are distinct from $f(x)$.
\item Fix two distinct partition intervals $I,J$, and let $U$ be any nonempty open set.  By the mixing property we can find a common value of $n$ so that $f^n(U)\cap I\neq\emptyset$ and $f^n(U)\cap J\neq\emptyset$.  Since there is a point of $P$ between $I$ and $J$, it follows by the intermediate value theorem that $U$ contains a point of $P^n$. \hfill \qedhere
\end{enumerate}
\end{proof}

\subsection{Subeigenvectors and Conjugate Maps}\label{subsec:sub}

The purpose of this section is to establish for maps $f\in\CMM$, a close connection between Lipschitz continuous maps conjugate to $f$ and subeigenvalues of the transition matrix of $f$.

\begin{definition}
Let $A$ be a nonnegative matrix with countable index set $S$.  We will say that $\lambda$ is a \emph{subeigenvalue} and that $v$ is a \emph{$\lambda$-subeigenvector} of the matrix $A$ if $v$ is a nonnegative vector satisfying the coordinate-wise inequalities
\begin{equation*}
\sum_{j\in S} A_{ij} v_j \leq \lambda v_i, \quad i\in S.
\end{equation*}
We say $v$ is \emph{deficient in coordinate $i$} if there is strict inequality $\sum_{j\in S} A_{ij} v_j < \lambda v_i$.
\end{definition}

\begin{definition}
The \emph{transition matrix} $A=A(f,P)$ associated to a map $f\in\CMM$ with partition set $P$ is the 0-1 valued finite or countably infinite matrix with rows and columns indexed by the partition intervals and entries
\begin{equation*}
A_{I,J}=\begin{cases}1,&f(I)\supset J \\ 0,&f(I)\cap J=\emptyset\end{cases}.
\end{equation*}
Because of the Markov property (forward-invariance of the partition set), one of these two conditions must hold.

Similarly, the \emph{transition graph} $\Gamma=\Gamma(f,P)$, is the countable directed graph with vertex set $\B(P)$ and with an arrow $I\to J$ if and only if $f(I)\supset J$.
\end{definition}

\begin{remark}
Two remarks are in order.  First, the mixing property of $f$ immediately implies \emph{irreducibility} of $A$ (for all $I,J$ there is $n\in\N$ such that $A^n_{IJ}>0$).  Second, irreducibility of $A$ immediately implies that each subeigenvector has all entries strictly positive.
\end{remark}

\begin{proposition}\label{prop:subeig_nec}
Suppose $f\in\CMM$ with a partition set $P$.  If $f$ admits a conjugate map with some Lipschitz constant $\lambda$, then $A(f,P)$ admits a summable $\lambda$-subeigenvector.
\end{proposition}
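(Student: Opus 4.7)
The plan is to extract a subeigenvector directly from the lengths of the partition intervals transplanted by the conjugating homeomorphism. Concretely, let $\psi:[0,1]\to[0,1]$ be a homeomorphism for which $g:=\psi\circ f\circ\psi^{-1}$ has Lipschitz constant $\lambda$, and set
\[
v_I := |\psi(I)|, \quad I\in\B(P),
\]
where $|\cdot|$ denotes Lebesgue measure. Summability of $v$ is automatic: the intervals $\{\psi(I)\}_{I\in\B(P)}$ are pairwise disjoint open subintervals of $[0,1]$, so $\sum_{I\in\B(P)} v_I \leq 1$, and each entry $v_I$ is strictly positive since $\psi$ is a homeomorphism and $I$ is a non-degenerate open interval.

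To verify the subeigenvector inequality $\sum_J A_{IJ}v_J \leq \lambda v_I$, I would unpack the Markov structure from Lemma~\ref{lem:refine}. Mixing forces $f|_I$ to be strictly monotone, so $f(I)$ is an open interval whose endpoints lie in $P$ by forward invariance. Consequently $f(I)$ is the disjoint union of precisely those partition intervals $J$ with $A_{IJ}=1$, together with a countable (hence Lebesgue-null) set of points from $P$. Applying $\psi$ and using the conjugacy relation $g\circ\psi=\psi\circ f$, I get
\[
\sum_{J\in\B(P)} A_{IJ} v_J = \sum_{J\subset f(I)} |\psi(J)| = |\psi(f(I))| = |g(\psi(I))|.
\]
The Lipschitz hypothesis then closes the argument: since $g$ sends the interval $\psi(I)$ into an interval of length at most $\lambda|\psi(I)|$, I obtain $|g(\psi(I))|\leq \lambda v_I$.

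The argument is essentially the single short calculation above and presents no serious obstacle. The two facts to keep in mind are that the boundary points of the sub-intervals $J\subset f(I)$ lie in $P$ and thus contribute zero to Lebesgue measure, and that a Lipschitz map contracts or expands interval lengths by at most the Lipschitz constant. The genuinely harder direction, which presumably comes next in the paper, is the converse: starting from an abstract summable subeigenvector and building a conjugate map realizing the corresponding Lipschitz constant.
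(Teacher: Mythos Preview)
Your argument is correct and essentially identical to the paper's proof: define $v_I=|\psi(I)|$, observe summability from disjointness of the $\psi(I)$, and derive the subeigenvector inequality via the one-line computation $(Av)_I=\sum_{J\subset f(I)}|\psi(J)|=|g(\psi(I))|\leq\lambda|\psi(I)|$. The only cosmetic difference is that the paper notes the union of the $\psi(I)$ is cocountable and hence $\sum_I v_I=1$ rather than merely $\leq 1$.
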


\begin{proof}
Suppose $g=\psi\circ f\circ\psi^{-1}$ has Lipschitz constant $\lambda$.  Define $v_I=|\psi(I)|$, $I\in\B(P)$ where vertical bars $|\cdot|$ denote the length of an interval.  Since the intervals $\psi(I)$ are disjoint intervals in $[0,1]$ whose union is cocountable, we get summability $\sum_I v_I=1$.  To see that $v$ is a $\lambda$-subeigenvector, notice that
\begin{equation*}
(Av)_I=\sum_{J\subseteq f(I)} v_J = \sum_{J\subseteq f(I)} |\psi(J)| = |g(\psi(I))| \leq \lambda |\psi(I)| = \lambda v_I
\end{equation*}
where the inequality follows from the Lipschitz property of $g$.
\end{proof}

The next theorem is a partial converse to Proposition \ref{prop:subeig_nec}.  It generalizes a result in~\cite[Theorem 2.5]{B12}, which gives for $\lambda$-eigenvectors (with no deficiency) a conjugate map of constant slope $\pm\lambda$.

\begin{theorem}\label{th:subeig_suf} Suppose $f\in \CMM$ with a partition set $P$.
 If the transition matrix $A(f,P)$ admits a summable $\lambda$-subeigenvector which is deficient in only finitely many coordinates, then $f$ admits a conjugate map with Lipschitz constant $\lambda$.
\end{theorem}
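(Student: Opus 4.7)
The plan is to construct an explicit homeomorphism $\psi:[0,1]\to[0,1]$ such that $g = \psi\circ f\circ\psi^{-1}$ is $\lambda$-Lipschitz, generalizing the eigenvector construction of \cite[Theorem 2.5]{B12} to the subeigenvector setting with finite deficiency.

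I would first normalize so that $\sum_{I\in\B(P)} v_I = 1$ and define $\psi$ on $P$ by placing an interval $\psi(I)$ of length $v_I$ into $[0,1]$ for each $I\in\B(P)$ in the order inherited from $[0,1]$. Extending $\psi$ inside each $I$ so that $g|_{\psi(I)}$ is affine with slope $\pm(Av)_I/v_I \leq \lambda$ then recursively determines the cylinder lengths
\begin{equation*}
|\psi([I_0 I_1 \cdots I_n])| \;=\; v_{I_n}\prod_{k=0}^{n-1}\frac{v_{I_k}}{(Av)_{I_k}},
\end{equation*}
with the nesting relation $\sum_{I_n}|\psi([I_0\cdots I_n])| = |\psi([I_0\cdots I_{n-1}])|$ holding automatically from the definition of $(Av)_{I_{n-1}}$.

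Next I would verify that $\psi$ extends to a homeomorphism of $[0,1]$. In the original interval, cylinders shrink to singletons by density of $Q = \bigcup_n P^n$ (Lemma \ref{lem:refine}\ref{it:dense}). For the $\psi$-images, every factor $v_{I_k}/(Av)_{I_k}$ equals $1/\lambda$ whenever $I_k\notin F$, so along any itinerary that eventually leaves the finite deficiency set $F$ the cylinder length decays like $\lambda^{-n}$. Once $\psi$ is established as a homeomorphism, $g$ is piecewise affine on the partition $\psi(P)$ with slopes bounded by $\lambda$, and a finite telescoping of $|g(a_{i+1})-g(a_i)| \le \lambda(a_{i+1}-a_i)$ across the intervening partition points between any two arguments $x<y$ yields the global $\lambda$-Lipschitz bound.

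The main obstacle will be the shrinkage of $\psi$-cylinders along any realizable infinite itinerary $(I_0,I_1,\ldots)$ that stays entirely in $F$. On such itineraries every factor $v_{I_k}/(Av)_{I_k}$ lies strictly above $1/\lambda$, and the product need not tend to $0$, whereas the corresponding original cylinders must collapse to a single point -- so naively defined $\psi$ could fail to be a function there. Since $F$ is finite, any such itinerary is eventually periodic in the restricted graph $A|_{F\times F}$, so the exceptional set forms a bounded, finitely-described dynamical subsystem. I would address this either by refining $P$ with a finite union of periodic orbits (dense by topological mixing) that subdivides the intervals of $F$ into smaller pieces admitting a non-deficient $\lambda$-subeigenvector for the refined matrix -- reducing to \cite[Theorem 2.5]{B12} -- or by locally modifying the affine slope assignment on $\psi(I)$ for $I\in F$ to a piecewise affine map with slopes still bounded by $\lambda$ but weighted so as to enforce cylinder shrinkage along the $F$-cycles. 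The finiteness of $F$ is essential in either case: only a bounded perturbation of the main construction is required, and the global Lipschitz bound is preserved.
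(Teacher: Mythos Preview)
Your construction is exactly the paper's: the cylinder-length formula $v_{I_n}\prod_{k<n} v_{I_k}/(Av)_{I_k}$ and the resulting piecewise affine model $g$ with slopes $\lambda_{I}=(Av)_I/v_I$ on each $\psi(I)$ are precisely what the authors use.  Where you diverge is in the treatment of the shrinkage of $\psi$-cylinders, and here your proposal has a genuine gap.

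First, your dichotomy is incomplete.  ``Eventually leaves $F$'' (which you handle) and ``stays entirely in $F$'' (your declared obstacle) do not exhaust the possibilities: an itinerary can visit $F$ infinitely often while also leaving $F$ infinitely often, and your $\lambda^{-n}$ argument does not cover that case.  Second, and more importantly, the obstacle you identify is illusory, so your proposed repairs (refining $P$ to kill the deficiency, or reweighting the slopes on $F$) are unnecessary.  The paper observes that $\Delta\psi([I_0\cdots I_{n+1}])/\Delta\psi([I_0\cdots I_n])=v_{I_{n+1}}/(Av)_{I_n}\le 1$ always, since $I_{n+1}$ is one summand of $(Av)_{I_n}$.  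Hence the cylinder lengths are monotone nonincreasing in $n$.  Now if any symbol $J$ recurs infinitely often in the itinerary (which is automatic whenever $F$ is visited infinitely often, since $F$ is finite), then by topological mixing one may assume $f(J)$ covers at least two partition intervals, and the ratios $v_K/(Av)_J$ over $K\subset f(J)$ sum to $1$ with more than one term, so their supremum $c_J<1$.  Thus at every visit to $J$ the cylinder contracts by at least $c_J$, forcing the limit to $0$.  The remaining case---no symbol recurs infinitely often---forces the itinerary eventually out of $F$, and your $\lambda^{-n}$ argument applies.

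Your Fix~1 is also not clearly viable: refining $P$ changes the transition matrix, and there is no reason a non-deficient (i.e.\ genuine) $\lambda$-eigenvector should exist for the refined matrix when $\lambda>\exp h_{Gur}(\Sigma)$; Pruitt's theory only guarantees subeigenvectors in that regime.  Fix~2 is too vague to evaluate.  The direct monotonicity-plus-recurrence argument above closes the proof with no modification to your construction.
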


Heuristically, the proof may be summarized as follows.  Our candidate for a conjugate map is a piecewise affine map $g$ which has an identical Markov structure as $f$ but takes the entries of the subeigenvector as the lengths of its partition intervals.  In this way $g$ has constant slope $\pm\lambda$ except on the intervals where the eigenvector was deficient -- and there it has even smaller slope.  Our candidate for the conjugacy $\psi$ is the map that identifies points between the two systems if they have the same itineraries.  By controlling the deficiency of the subeigenvector, we rule out homtervals for $g$ -- this is the essence of equation~\eqref{supinf} below -- which in turn allows us to verify the continuity of $\psi$.  Now we set to the real work of writing down all the details.

\begin{proof}
Denote by $A$ the transition matrix and by $v$ the $\lambda$-subeigenvector.  We may assume $v$ has been scaled so that the sum of its entries is $1$.  We will construct a homeomorphism $\psi$ such that $\psi\circ f\circ\psi^{-1}$ has Lipschitz constant $\lambda$.  We use throughout the proof the notation and results of Lemma~\ref{lem:refine}.  We begin by defining $\psi$ on the sets $P^n$ (and hence on $Q$).    For $[I_0\cdots I_n]\in\B(P^n)$, define
\begin{equation}\label{deltapsi}
\Delta\psi([I_0\cdots I_n]):=
\frac{v_{I_n}}{\prod_{i=0}^{n-1}\lambda_{I_i}}
, \qquad \lambda_{J}:=\frac{(Av)_J}{v_J}\leq\lambda,
\end{equation}
where the empty product (when $n=0$) is taken to be $1$.  This definition has the following two good properties:
\begin{multline}\label{welldefinedDelta}
\sum_{J\subseteq f(I_n)}\Delta\psi([I_0\cdots I_n J]) = \frac{1}{\prod_{i=0}^{n-1} \lambda_{I_i}}\frac{\sum\limits_{J\subseteq f(I_n)} v_J}{\lambda_{I_n}}=\\
=\frac{1}{\prod_{i=0}^{n-1} \lambda_{I_i}}\frac{\left(Av\right)_{I_n}}{\lambda_{I_n}} = \Delta\psi([I_0\cdots I_n]),
\end{multline}
\begin{equation}\label{expansionDelta}
\Delta\psi([I_1\cdots I_n])=\lambda_{I_0}\cdot\Delta\psi([I_0\cdots I_n]).
\end{equation}

Notice that $P^n\subset P^{n+1}$ and that each $[I_0\cdots I_n]\in\B(P^n)$ is subdivided in the partition $P^{n+1}$ into the intervals $[I_0\cdots I_n J]$, where $J$ ranges over all members of $\B(P)$ contained in $f(I_n)$.  It follows by \eqref{welldefinedDelta} there is no ambiguity in the definition
\begin{equation}\label{defpsi}
\psi(x):=\sum_{\emptyset\neq[I_0\cdots I_n]\leq x} \Delta\psi([I_0\cdots I_n]), \quad \textnormal{if }x\in P^n.
\end{equation}
This defines $\psi$ on the set $Q$.  Since $0,1\in P^0$, we find $\psi(0)=0$ (the empty sum) and
\begin{equation*}
\psi(1)=\sum_{[I]\leq 1}\Delta\psi([I])=\sum_{I\in\B(P)}\frac{v_I}{1}=1.
\end{equation*}
Strict monotonicity of $\psi$ on each $P^n$ follows because all entries of $v$ are positive.  It follows also that $\psi$ is strictly monotone on $Q$.

Suppose now that two points $x,x'\in Q$ belong to a common partition interval $I_0$.  Take $n$ minimal so that $x,x'\in P^n$.  Notice that $f$ induces a bijective correspondence between the set of intervals $[I_0\cdots I_n]\in\B(P^n)$ contained in $[x;x']$ (the interval with endpoints $x,x'$) and the set of intervals $[I_1\cdots I_n]\in\B(P^{n-1})$ contained in $[f(x);f(x')]$.  Using \eqref{expansionDelta} and taking sums, we find
\begin{multline}
\label{expansion} |\psi(f(x))-\psi(f(x'))|=\hspace{-.5em}\sum_{\substack{[I_1\cdots I_n] \\ \subset [f(x);f(x')]}}\hspace{-.5em}\Delta\psi([I_1\cdots I_n]) = \sum_{\substack{[I_0\cdots I_n]\\ \subset[x;x']}} \lambda_{I_0} \Delta\psi([I_0\cdots I_n]) = \\
= \lambda_{I_0} |\psi(x)-\psi(x')|, \quad \textnormal{for $x,x'\in Q$ in the same interval $I_0$.}
\end{multline}
Even if $x,x'\in Q$ do not belong to a common partition interval, we still have the intermediate value theorem.  Therefore for each nonempty $[I_1\cdots I_n]\subset[f(x);f(x')]$, there is at least one choice of $I_0$ which yields a nonempty $[I_0\cdots I_n]\subset[x;x']$.  Continuing as in \eqref{expansion}, we obtain the inequality
\begin{equation}
\label{lipschitz} |\psi(f(x))-\psi(f(x'))|\leq\lambda |\psi(x)-\psi(x')|, \quad \textnormal{for arbitrary $x,x'\in Q$}.
\end{equation}
We claim that we can extend the map $\psi:Q\to[0,1]$ to a homeomorphism $\psi':[0,1]\to[0,1]$ by the rule
\begin{equation*}
\psi'(x)=\sup \psi(Q\cap[0,x)),
\end{equation*}
and that the conjugate map $g:=\psi'\circ f \circ \psi'^{-1}$ has Lipschitz constant $\lambda$.  There are several points here to verify, namely,
\begin{enumerate}[label=(\roman*)]
\item\label{it:ext} $\psi'$ is an extension of $\psi$, i.e., $\psi'(x)=\psi(x)$ for $x\in Q$,
\item\label{it:str} $\psi'$ is strictly monotone,
\item\label{it:cts} $\psi'$ is continuous, and
\item\label{it:lip} $g$ has Lipschitz constant $\lambda$.
\end{enumerate}
All four points follow quickly if we can verify the equality
\begin{equation}\label{supinf}
\sup \psi(Q\cap[0,x)) = \inf \psi(Q\cap(x,1]), \qquad x\in[0,1],
\end{equation}
holding to the agreement that the empty set has supremum 0 and infimum 1.  Then point \ref{it:ext} follows from the following observation (using the monotonicity of $\psi$),
\begin{equation}\label{points1and3}
\sup \psi(Q\cap[0,x))=\lim_{\substack{y\to x^-,\\y\in Q}} \psi(y) \leq \psi(x) \leq \lim_{\substack{y\to x^+,\\y\in Q}} \psi(y) = \inf \psi(Q\cap(x,1]).
\end{equation}
Point \ref{it:str} follows from the density of $Q$ and the strict monotonicity of $\psi$.  Point \ref{it:cts} follows from~\ref{it:ext} and~\eqref{points1and3}.  Point \ref{it:lip} follows by replacing each $\psi$ in \eqref{lipschitz} with $\psi'$ and recalling the density of $Q$.

The proof of \eqref{supinf} is quite technical and is deferred to the appendix.
\end{proof}

\subsection{Reverse Salama Entropy}\label{subsec:sal}

In this section we define a new notion of entropy for topological Markov chains, called reverse Salama entropy.  There are two good reasons motivating our definition.  The first is that reverse Salama entropy plays a key role related to summable subeigenvectors of countable matrices - Theorem~\ref{th:sumsubeig}.  The second is that it has a dynamical interpretation for interval maps - Theorem~\ref{th:growthpreimages}.

Let $A$ be a countable matrix%
\footnote{Our interest is in transition matrices of interval maps, $A=A(f,P)$, $S=\B(P)$. But the theory we develop here is valid for general countable state topological Markov chains.}
whose entries are zeros and ones, and whose rows and columns are indexed by some
countable set $S$ (as in states).  Assume that $A$ is \emph{irreducible}, i.e., for each $i,j\in S$ there is $n\geq1$ such that $A^n_{ij}>0$.  We associate to $A$ the directed graph $\Gamma$ with vertex set $S$ and with arrows $i\to j$ if and only if $A_{ij}=1$.  A \emph{path} in $\Gamma$ of \emph{length} $n$ is an $n+1$-tuple of vertices, denoted $[i_0\cdots i_n]$, with the property that there are arrows $i_k\to i_{k+1}$ in $\Gamma$ for each $k=0,\cdots,n-1$; we say that this path \emph{begins} at $i_0$ and \emph{ends} at $i_n$.  The set of all finite paths is denoted $\L$ (as in language).  A \emph{loop} is a path which begins and ends at the same vertex.  Irreducibility of $A$ tells us that $\Gamma$ is \emph{connected}, i.e., for any pair of vertices $i,j$, there exists a path which begins at $i$ and ends at $j$.  An \emph{infinite path} in $\Gamma$ is a sequence of vertices $i_0 i_1 i_2 \ldots \in S^{\N}$ with the property that there are arrows $i_k\to i_{k+1}$ in $\Gamma$ for each $k\geq0$.  The \emph{topological Markov chain} associated with $A$ is the dynamical system $(\Sigma,\sigma)$ where $\Sigma\subset S^{\N}$ consists of all infinite paths in $\Gamma$ and $\sigma:\Sigma\to\Sigma$ is the shift transformation $i_0 i_1 \ldots \mapsto i_1 i_2 \ldots$.  We define a topology on $\Sigma$ by giving $S$ the discrete topology, $S^{\N}$ the product topology, and $\Sigma\subset S^{\N}$ the inherited subspace topology.  Then the irreducibility of $A$ corresponds to the \emph{topological transitivity} of $(\Sigma,\sigma)$.

We wish to relate the subeigenvalues of $A$ to the entropy of $\Sigma$.  However, in the absence of compactness, we have to specify carefully which entropy we have in mind.  We follow the approaches of Gurevich \cite{G} and Salama \cite{Sa}, and define entropy by counting paths in $\Gamma$.
\begin{equation}\label{paths}
\begin{aligned}
p_{ab}^{(n)} &:= (A^n)_{ab} = \# \{ [i_0 \cdots i_n] \in \L : i_0 = a, i_n=b \}, \\
p_{a\dot}^{(n)} &:= \textstyle\sum_{b}(A^n)_{ab} = \# \{ [i_0 \cdots i_n] \in \mathcal{L} : i_0 = a \}, \\
p_{\dot b}^{(n)} &:= \textstyle\sum_{a}(A^n)_{ab} = \# \{ [i_0 \cdots i_n] \in \mathcal{L} : i_n = b \}. \\
\end{aligned}
\end{equation}

We will consider three kinds of entropy.  All three are defined in terms of fixed vertices $a,b$; transitivity implies that they do not depend on a concrete choice of $a,b$.  They are
\begin{equation}
\begin{aligned}
h_{Gur}(\Sigma) &:= \limsup_{n\to\infty} \frac1n \log p_{ab}^{(n)}, && \text{the \emph{Gurevich entropy} of $\Sigma$,} \\
h_{Sal}(\Sigma) &:=\limsup_{n\to\infty} \frac1n \log p_{a\dot}^{(n)}, && \text{the \emph{Salama entropy} of $\Sigma$, and} \\
h_{RevSal}(\Sigma) &:=\limsup_{n\to\infty} \frac1n \log p_{\dot b}^{(n)}, && \text{the \emph{reverse Salama entropy\footnotemark} of $\Sigma$.}
\end{aligned}
\end{equation}
\footnotetext{The reason for the name \emph{reverse Salama entropy} is simple -- it is nothing more than the Salama entropy of the chain we obtain by taking the transpose of $A$, or equivalently, by reversing the direction of all arrows in the graph $\Gamma$.}

We record the following theorem of Bobok and Bruin, which says that the topological entropy of a map from $\CMM$ is given by the Gurevich entropy of the corresponding topological Markov chain.

\begin{theorem}[\cite{BB}]\label{th:Gurnew}
Let $f\in\CMM$ with partition set $P$.  Let $\Sigma$ be the associated topological Markov chain.  Then
\begin{equation*}
h_{Gur}(\Sigma) = h(f).
\end{equation*}
\end{theorem}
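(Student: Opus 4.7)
The plan is to establish both inequalities $h(f)\geq h_{Gur}(\Sigma)$ and $h(f)\leq h_{Gur}(\Sigma)$, exploiting the Markov structure of Lemma~\ref{lem:refine} and reducing to finite subsystems. A natural intermediate object is the family of finite irreducible SFTs $\Sigma_F\subset\Sigma$, indexed by finite $F\subset\B(P)$ on which the induced subgraph is strongly connected; Gurevich--Salama theory gives $h_{Gur}(\Sigma)=\sup_F h_{top}(\Sigma_F)$, so the theorem reduces to proving $h(f)=\sup_F h_{top}(\Sigma_F)$.

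For the lower bound $h(f)\geq \sup_F h_{top}(\Sigma_F)$, I would fix such an $F$ and form the compact $f$-invariant set $K_F\subset[0,1]$ of points whose itineraries lie in $\Sigma_F$. Lemma~\ref{lem:refine}\ref{it:hom} provides a continuous surjection $\pi\colon \Sigma_F\to K_F$ intertwining $\sigma$ and $f$; topological mixing of $f$ rules out homtervals in the Markov structure, so $\pi$ is injective off a countable set, and a standard argument (variational principle together with the fact that $\pi$ is a measurable conjugacy for every non-atomic invariant measure) yields $h(f)\geq h_{top}(K_F,f)=h_{top}(\Sigma_F)$. Taking the supremum over $F$ gives the desired inequality.

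For the upper bound $h(f)\leq \sup_F h_{top}(\Sigma_F)$, I would control $(n,\eps)$-spanning sets by a finite alphabet. Given $\eps>0$, only finitely many intervals $I\in\B(P)$ satisfy $|I|\geq \eps$; the $P$-itinerary of any orbit with respect to this finite collection, with one additional symbol pooling the remaining \emph{small} intervals, defines paths in an auxiliary finite graph. An $(n,\eps)$-spanning set can be chosen of size bounded above by the $n$-step path count in this auxiliary graph, whose exponential growth rate is matched, up to an error vanishing as $\eps\to 0$, by $h_{top}(\Sigma_F)$ for some finite $F$.

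The main obstacle is precisely this upper bound: non-compactness permits orbits to pass through arbitrarily small partition intervals, and it is not \emph{a priori} clear that this cannot generate combinatorial growth beyond any finite sub-SFT rate. The forward invariance of $\textnormal{Acc}\,P$ from Lemma~\ref{lem:refine}\ref{it:forw} together with topological mixing is the key lever here: orbits cannot remain trapped near the accumulation locus, so their returns to the big partition intervals occur at a rate governed by the finite sub-SFTs. A careful bookkeeping of these returns, passing first to $\limsup_n$ and then letting $\eps\to 0$, should close the argument.
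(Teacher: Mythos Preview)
The paper does not prove this theorem; it merely records it, citing \cite{BB}. So there is no proof in the paper to compare against, and your proposal is an attempt to supply what the paper deliberately outsources.

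Your lower bound is fine: the itinerary map $\pi\colon\Sigma_F\to K_F$ is a continuous surjection, at most two-to-one (the fibers over points in $Q$ can have two elements, all other fibers are singletons), and this already gives $h_{top}(\Sigma_F)=h_{top}(K_F,f)\leq h(f)$ without needing the variational principle.

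The upper bound, however, is not a proof but a hope. You acknowledge as much by writing ``should close the argument.'' The specific problem is the pooling step: when you collapse all partition intervals of length $<\eps$ into a single symbol $*$, the resulting finite graph can have many more paths than any $\Sigma_F$, because the vertex $*$ may have arrows both to and from every other vertex, and you have no a priori control on how many consecutive $*$'s an orbit can accumulate. Your suggestion that forward invariance of $\textnormal{Acc}\,P$ and mixing prevent orbits from lingering near the accumulation locus does not yield a quantitative bound: mixing tells you orbits \emph{eventually} leave, but says nothing about how long they take, and the entropy of the auxiliary graph is sensitive to exactly this. The gap is real: transient countable Markov chains can have $h_{Sal}>h_{Gur}$ precisely because excursions through the tail of the alphabet generate extra combinatorial growth, and your pooling heuristic does not distinguish this phenomenon from the one you want to rule out. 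A correct argument for the upper bound (as in \cite{BB}) typically goes through the variational principle on both sides, using that Gurevich entropy equals the supremum of measure-theoretic entropies over shift-invariant Borel probabilities, together with the fact that the itinerary map is a Borel conjugacy off a set of universal measure zero.
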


Reverse Salama entropy is also useful for studying interval maps%
\footnote{By way of contrast, Salama entropy is not particularly meaningful for interval maps, because different choices of the partition set $P$ (for a fixed map $f$) can yield Markov chains with different Salama entropies.} from the class $\CMM$; it measures the growth rate of the cardinality of iterated preimage sets.

\begin{theorem}\label{th:growthpreimages}
Let $f\in\CMM$ with partition set $P$.  Let $\Sigma$ be the associated topological Markov chain.  Then
\begin{equation*}
h_{RevSal}(\Sigma) = \limsup_{n\to\infty} \frac1n \log \# f^{-n}(\{x\}),
\end{equation*}
for any $x\in[0,1]\setminus\textnormal{Acc}(P)$.
\end{theorem}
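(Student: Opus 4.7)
The plan is to count $\#f^{-n}(\{x\})$ directly in terms of the Markov combinatorics, reducing the question to counting paths of length $n$ in $\Gamma$ that terminate at the one or two partition intervals adjacent to $x$. The first step is to split the hypothesis $x\notin\textnormal{Acc}(P)$ into two cases: either $x\notin P$, in which case $x$ lies in a unique partition interval $I_x$; or $x$ is an isolated point of $P$, in which case $x$ is an endpoint of one or two partition intervals $J_L,J_R\in\B(P)$.

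In the first case, forward invariance of $P$ keeps every preimage of $x$ out of $P^n$ (else $f^j(y)\in P$ for some $j\le n$ would yield the contradiction $x\in P$), so by the homeomorphism property in Lemma~\ref{lem:refine}\ref{it:hom} each nonempty cylinder $[I_0\cdots I_{n-1}I_x]$ carries exactly one preimage of $x$ and no other preimages exist. This yields the clean identity $\#f^{-n}(\{x\})=p_{\dot I_x}^{(n)}$, after which applying $\limsup\tfrac1n\log$ delivers $h_{RevSal}(\Sigma)$ -- using the standard fact that the quantity $\limsup\tfrac1n\log p_{\dot b}^{(n)}$ does not depend on the terminal vertex $b$ when $\Gamma$ is connected.

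The second case is more delicate because preimages now necessarily lie in $P^n$. The crucial sub-step I would carry out is to show that each $y\in f^{-n}(\{x\})$ is an \emph{isolated} point of $P^n$, hence an endpoint of one or two adjacent cylinders of $P^n$. If instead $y_k\to y$ with $y_k\in P^n\setminus\{y\}$, I would pass to a subsequence in which $y_k\in f^{-i}(P)$ for a fixed $i\in\{0,\dots,n\}$, so $f^i(y_k)\to f^i(y)\in P$. Either infinitely many $f^i(y_k)$ differ from $f^i(y)$, forcing $f^i(y)\in\textnormal{Acc}(P)$ and hence $x=f^{n-i}(f^i(y))\in\textnormal{Acc}(P)$ by Lemma~\ref{lem:refine}\ref{it:forw} -- contradicting the hypothesis; or $f^i(y_k)=f^i(y)$ eventually, which contradicts the strict monotonicity of $f^i$ on the partition intervals of $P^{i-1}$ supplied by Lemma~\ref{lem:refine}\ref{it:mar}.

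Once isolation is secured, the homeomorphism property forces any cylinder of $P^n$ adjacent to a preimage of $x$ to terminate in $J_L$ or $J_R$, and conversely each cylinder $[I_0\cdots I_{n-1}I_n]$ with $I_n\in\{J_L,J_R\}$ contributes exactly one endpoint mapping to $x$. Counting (preimage, adjacent cylinder) incidences -- each preimage being involved in one or two of them -- I obtain
\begin{equation*}
\tfrac12\bigl(p_{\dot J_L}^{(n)}+p_{\dot J_R}^{(n)}\bigr)\;\le\;\#f^{-n}(\{x\})\;\le\;p_{\dot J_L}^{(n)}+p_{\dot J_R}^{(n)},
\end{equation*}
and $\limsup\tfrac1n\log$ flattens both the factor $\tfrac12$ and the sum, yielding $h_{RevSal}(\Sigma)$ once more. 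The main obstacle throughout is the isolation sub-step in the second case, which simultaneously consumes the hypothesis $x\notin\textnormal{Acc}(P)$, the forward invariance of $\textnormal{Acc}(P)$, and the strict monotonicity that comes with topological mixing.
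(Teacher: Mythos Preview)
Your overall strategy is exactly the paper's: split on whether $x$ lies in a partition interval or is an isolated partition point, and in each case sandwich $\#f^{-n}(\{x\})$ between path counts $p_{\dot\,b}^{(n)}$ for the one or two partition intervals $b$ adjacent to $x$. Case~1 and the final sandwich in Case~2 match the paper almost verbatim (the paper uses the slightly looser lower bound $\tfrac12\,p_{\dot J}^{(n)}$, but that is immaterial for the $\limsup$).

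There is, however, a genuine gap in your isolation sub-step. In your ``case~B'' you have $y_k\to y$, $y_k\neq y$, and eventually $f^i(y_k)=f^i(y)$. You claim this contradicts strict monotonicity of $f^i$ on the $P^{i-1}$-intervals. But that only yields a contradiction if the $y_k$ eventually lie in the \emph{closure of a single} $P^{i-1}$-interval together with $y$ --- and that is precisely what you do not yet know. What strict monotonicity actually gives is that between any two of the $y_k$ there must lie a point of $P^{i-1}$, hence $y\in\textnormal{Acc}(P^{i-1})$. This is not yet a contradiction; you must then rerun the whole dichotomy with $n$ replaced by $i-1$ and descend. So the argument is salvageable by induction on the level, but as written the ``contradiction'' is not one.

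It is worth noting that the paper avoids this detour entirely. Rather than proving the stronger claim that preimages of $x$ are isolated in $P^n$, the paper uses only that they are not in $\textnormal{Acc}(P)$ --- which is immediate from forward invariance of $\textnormal{Acc}(P)$ (Lemma~\ref{lem:refine}\ref{it:forw}). That already suffices: since each iterate $f^j(y)$ lies in the closure of some $I_j\in\B(P)$, one builds inductively a cylinder $[I_0\cdots I_n]$ (with $I_{j+1}\subset f(I_j)$) whose closure contains $y$, and necessarily $I_n\in\{J_L,J_R\}$. This gives the upper bound directly, and the lower bound never needed isolation in the first place.
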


\begin{proof}
Fix $x\in[0,1]\setminus\text{Acc}(P)$.  We need to show that the number of iterated preimages of $x$ grows like $h_{RevSal}(\Sigma)$.  We use the sets $P^n$ and the corresponding partition intervals as described in Lemma~\ref{lem:refine}.  We also use the observation that $\text{Acc}(P)$ is invariant.  Therefore $x$ itself and every one of its iterated preimages belongs to the closure of at least one and at most two of the partition intervals $\B(P)$.

Suppose first that $x$ is in one of our (open) partition intervals $J\in\mathcal{B}(P)$.  Fix $n$.  The number $p_{\dot J}^{(n)}$ is the number of nonempty intervals $[I_0\cdots I_n]$ with $I_n=J$.  Each of these intervals is mapped homeomorphically by $f^n$ onto $J$, yielding $p_{\dot J}^{(n)}$ distinct preimages for $x$ under $f^n$.  There are no other preimages: the other partition intervals of $P^n$ have images under $f^n$ disjoint from $J$.  Taking logarithms and sending $n\to\infty$, the result follows.

Now suppose that $x$ is the common endpoint of two consecutive partition intervals $J, K$.  Fix $n$.  We have
\begin{equation*}
\frac12 p_{\dot J}^{(n)} \leq \# f^{-n}(\{x\}) \leq p_{\dot J}^{(n)} + p_{\dot K}^{(n)}.
\end{equation*}
The left-hand inequality is because because each nonempty interval $[I_0\cdots I_n]$ with $I_n=J$ has an endpoint mapped by $f^n$ to $x$, and each such endpoint can belong to at most two such intervals.  The right-hand inequality is because any interval $[I_0\cdots I_n]$ containing an $n$th preimage of $x$ must have $I_n=J$ or else $I_n=K$.  Exponential growth rate is not affected by dividing by 2, and if two sequences have a common exponential growth rate, then so does their sum.

There is one remaining case to consider, when $x$ is the endpoint of exactly one partition interval $J$.  Since by hypothesis, $x\not\in\textnormal{Acc } P$, this can happen only when $x\in\{0,1\}$.  The proof is the same as the previous case, but with the inequality $\frac12 p_{\dot J}^{(n)} \leq \# f^{-n}(\{x\}) \leq p_{\dot J}^{(n)}$.
\end{proof}

\subsection{First Entrance Paths}

As a technical tool, we will need an alternative characterization of reverse Salama entropy in terms of first entrance paths to a fixed vertex.  Thus, we define the path counts
\begin{equation}\label{paths2}
\begin{aligned}
\taboo{p}{c\,}{ab}{(n)} &:= \# \{ [i_0 \cdots i_n] \in \L : i_0 = a, i_n=b, i_j \neq c, 1\leq j\leq n-1 \}, \\
\taboo{p}{c\,}{\dot b}{(n)} &:= \# \{ [i_0 \cdots i_n] \in \L : i_n = b, i_j \neq c, 0\leq j\leq n-1 \}.
\end{aligned}
\end{equation}

\begin{theorem}\label{th:sal}
In a transitive, countable-state, topological Markov chain $\Sigma$,
\begin{equation*}
h_{RevSal}(\Sigma) > h_{Gur}(\Sigma) \implies h_{RevSal}(\Sigma)=\limsup_{n\to\infty} \frac1n \log \taboo{p}{a}{\dot a}{(n)},
\end{equation*}
where $a\in S$ is an arbitrary fixed vertex.
\end{theorem}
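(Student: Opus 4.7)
The plan is a renewal-type decomposition followed by a comparison of exponential growth rates. First I would partition the set of length-$n$ paths $[i_0\cdots i_n]$ ending at $i_n=a$ according to the smallest index $k\in\{0,1,\ldots,n\}$ for which $i_k=a$. The prefix $[i_0\cdots i_k]$ is then a first-entrance path to $a$ of length $k$ (contributing a factor $\taboo{p}{a}{\dot a}{(k)}$), and the suffix $[i_k\cdots i_n]$ is a loop at $a$ of length $n-k$ (contributing $p_{aa}^{(n-k)}$). With the natural conventions $\taboo{p}{a}{\dot a}{(0)}=p_{aa}^{(0)}=1$, this bijective decomposition yields the renewal identity
\begin{equation*}
p_{\dot a}^{(n)} \;=\; \sum_{k=0}^{n} \taboo{p}{a}{\dot a}{(k)} \cdot p_{aa}^{(n-k)}.
\end{equation*}

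Next I would invoke the elementary convolution bound: for nonnegative sequences $(u_n)$, $(w_n)$ with $\alpha=\limsup_n\sqrt[n]{u_n}$ and $\beta=\limsup_n\sqrt[n]{w_n}$, one has $\limsup_n\sqrt[n]{\sum_{k=0}^n u_k w_{n-k}}\le\max(\alpha,\beta)$ (the standard estimate via $u_k\le C(\alpha+\eps)^k$, $w_k\le C(\beta+\eps)^k$ and a geometric sum). Setting $\lambda_F:=\limsup_n\sqrt[n]{\taboo{p}{a}{\dot a}{(n)}}$ and applying this to the renewal identity above, we obtain
\begin{equation*}
\exp h_{RevSal}(\Sigma) \;=\; \limsup_{n\to\infty}\sqrt[n]{p_{\dot a}^{(n)}} \;\le\; \max\bigl(\lambda_F,\ \exp h_{Gur}(\Sigma)\bigr).
\end{equation*}

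On the other hand, the pointwise inequality $\taboo{p}{a}{\dot a}{(n)}\le p_{\dot a}^{(n)}$ is immediate, so $\lambda_F\le\exp h_{RevSal}(\Sigma)$. Under the strict hypothesis $h_{RevSal}(\Sigma)>h_{Gur}(\Sigma)$, the maximum in the displayed bound cannot be attained by the second entry, so it must be attained by $\lambda_F$, forcing $\lambda_F\ge\exp h_{RevSal}(\Sigma)$. Combining both inequalities gives $\lambda_F=\exp h_{RevSal}(\Sigma)$, and taking logarithms yields the theorem.

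There is no serious obstacle in this argument. The path decomposition is bijective essentially by construction, and the convolution growth-rate bound is a routine exercise. The hypothesis $h_{RevSal}>h_{Gur}$ enters only in the final step, precisely to force the maximum of the two growth rates to land on the first-entrance term rather than on the Gurevich term; if equality $h_{RevSal}=h_{Gur}$ were allowed, the argument would still give $\lambda_F\le\exp h_{RevSal}$, but no matching lower bound.
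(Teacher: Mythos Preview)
Your proof is correct and follows essentially the same approach as the paper: both use the identical renewal (first-entrance) decomposition $p_{\dot a}^{(n)}=\sum_{k=0}^n \taboo{p}{a}{\dot a}{(k)}\,p_{aa}^{(n-k)}$ and then compare exponential growth rates, the paper phrasing this via radii of convergence of the corresponding power series and the product identity, you via the equivalent convolution growth-rate bound $\limsup\sqrt[n]{u\ast w}\le\max(\limsup\sqrt[n]{u_n},\limsup\sqrt[n]{w_n})$. The two presentations are interchangeable through Cauchy--Hadamard, and the hypothesis $h_{RevSal}>h_{Gur}$ is used in exactly the same way to pin the maximum (equivalently, the minimum radius) on the first-entrance term.
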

\begin{proof}
The inequality $h_{RevSal}(\Sigma) \geq \limsup_{n\to\infty}\frac1n \log \taboo{p}{a}{\dot a}{(n)}$ is obvious from the definitions.  For the reverse inequality, it suffices to show that the radius of convergence of the power series $\sum p_{\dot a}^{(n)} z^n$ is greater than or equal to that of $\sum \, \taboo{p}{a}{\dot a}{(n)}z^n$.

Every length $n$ path $[i_0 \cdots i_{n-1}\, a]$ can be represented uniquely as the concatenation of a length $k$ first entrance path $[i_0 \cdots i_{k-1}\, a]$, $k=\min\{j\in\{0,\ldots,n\} : i_j=a\}$, and a length $n-k$ loop $[a\, i_{k+1} \cdots i_{n-1}\, a]$.  Therefore we obtain the product identity
\begin{equation*}
\underbrace{\sum_{n=0}^\infty p_{\dot a}^{(n)} z^n}_{\substack{\text{convergence radius:}\\ \exp -h_{RevSal}(\Sigma)}}
= \sum_{n=0}^\infty \left( \sum_{k=0}^n \taboo{p}{a\,}{\dot a}{(k)} \cdot p_{aa}^{(n-k)} \right) z^n =
\left( \sum_{n=0}^\infty \taboo{p}{a}{\dot a}{(n)} z^n \right)
\underbrace{\left( \sum_{n=0}^\infty p_{aa}^{(n)} z^n \right)}_{\substack{\text{convergence radius:}\\ \exp -h_{Gur}(\Sigma)}}
\end{equation*}
The radius of convergence of a product of concentric power series is at least the minimum of the radii of the factors.  Since the Reverse Salama entropy was assumed to be strictly larger than the Gurevich entropy, we are finished.
\end{proof}

\begin{remark}
An analogous statement (using last exit paths) holds for Salama entropy.
\end{remark}



\subsection{Subeigenvalues and Reverse Salama Entropy}\label{subsec:subsal}

Now we are ready to relate the subeigenvalues of the irreducible, countable zero-one matrix $A$ to the entropies of the associated topological Markov chain $\Sigma$.  We use the following power series.
\begin{equation*}
P_{ka}(z):=\sum_{n=0}^\infty p_{ka}^{(n)} z^n, \qquad \taboo{P}{a}{ka}{}(z):=\sum_{n=1}^\infty \taboo{p}{a}{ka}{(n)}z^n.
\end{equation*}

\begin{proposition}[Pruitt, \cite{Pr}]\label{prop:pruitt}
If $A$ admits a $\lambda$-subeigenvector, then we have  $\lambda\geq\exp h_{Gur}(\Sigma)$.  Conversely, If $\lambda>\exp h_{Gur}(\Sigma)$, then $A$ admits a $\lambda$-subeigenvector with deficiency in only one coordinate.
\end{proposition}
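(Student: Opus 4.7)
The plan is to prove the two implications separately.

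For the necessity direction: Suppose $v$ is a $\lambda$-subeigenvector. The remark just before Proposition~\ref{prop:subeig_nec} notes that irreducibility of $A$ forces every coordinate $v_i$ to be strictly positive, so I may divide by entries of $v$ freely. I would iterate the inequality $Av \le \lambda v$ to obtain $A^n v \le \lambda^n v$ coordinate-wise for every $n \ge 1$. Extracting the $a$-th coordinate and keeping only the $b$-th summand on the left gives
\[ p_{ab}^{(n)} \, v_b \;\le\; (A^n v)_a \;\le\; \lambda^n v_a. \]
Taking $\frac{1}{n}\log$ and sending $n \to \infty$ yields $h_{Gur}(\Sigma) \le \log\lambda$.

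For the sufficiency direction, fix any reference vertex $a \in S$ and propose the candidate
\[ v_j \;:=\; P_{ja}(1/\lambda) \;=\; \sum_{n=0}^{\infty} p_{ja}^{(n)} \lambda^{-n}, \qquad j \in S. \]
I would then verify three things: (i) $v_j < \infty$; (ii) $v_j > 0$; and (iii) $(Av)_i = \lambda v_i - \lambda\,\delta_{ia}$, so that $v$ is a $\lambda$-subeigenvector with deficiency at the single coordinate $a$ and equality everywhere else.

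Items (ii) and (iii) are short. Positivity follows from $v_a \ge p_{aa}^{(0)} = 1$ together with irreducibility (which supplies some $n$ with $p_{ja}^{(n)} \ge 1$ for $j \ne a$). The identity in (iii) follows by interchanging the sums (permissible since everything is nonnegative), using $\sum_j A_{ij} p_{ja}^{(n)} = p_{ia}^{(n+1)}$, and re-indexing $m = n+1$:
\[ (Av)_i \;=\; \sum_{n\ge 0} \lambda^{-n} p_{ia}^{(n+1)} \;=\; \lambda \sum_{m\ge 1} \lambda^{-m} p_{ia}^{(m)} \;=\; \lambda\bigl(v_i - \delta_{ia}\bigr). \]

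The main obstacle, and the only place where the strict inequality $\lambda > \exp h_{Gur}(\Sigma)$ is actually used, is the finiteness assertion (i). I would handle it by transporting the radius of convergence of $P_{aa}$ to every $P_{ja}$ via irreducibility: for each $j$, pick once and for all an integer $r = r(j)$ with $p_{aj}^{(r)} \ge 1$ and a specific path realizing this. Prepending this fixed $a \to j$ path to an arbitrary $j \to a$ path of length $n$ defines an injection from the paths counted by $p_{ja}^{(n)}$ into the loops at $a$ of length $n + r$, whence $p_{ja}^{(n)} \le p_{aa}^{(n+r)}$. Therefore $\limsup (p_{ja}^{(n)})^{1/n} \le \exp h_{Gur}(\Sigma) < \lambda$, so $P_{ja}(1/\lambda)$ converges. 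This comparison is elementary but is the real content of the construction; once it is in place, the algebraic identity above simultaneously confirms the subeigenvalue property and isolates the deficiency at exactly one coordinate.
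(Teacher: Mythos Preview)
Your argument is correct and, for the converse, follows exactly the construction the paper gives: the same candidate $v_j = P_{ja}(\lambda^{-1})$ and the same one-line computation producing $(Av)_i = \lambda(v_i - \delta_{ia})$. The paper omits the necessity direction entirely and also leaves the finiteness of $v_j$ implicit (relying on the fact, already built into the definition of $h_{Gur}$, that $\limsup (p_{ja}^{(n)})^{1/n}$ is independent of $j$); your path-prepending argument is precisely the standard justification of that independence, so you have simply made explicit what the paper assumes.
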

We include Pruitt's proof of the converse statement, since we will need the construction later.
\begin{proof}
Fix $a\in S$.  Form a vector $v$ with entries $v_k := P_{ka}(\lambda^{-1})$.  For each $i\in S$,
\begin{multline*}
\lambda^{-1} \sum_k A_{ik} v_k = \lambda^{-1} \sum_{n=0}^\infty \sum_k A_{ik} p_{ka}^{(n)} \lambda^{-n}
= \sum_{n=0}^\infty p_{ia}^{(n+1)} \lambda^{-(n+1)} = v_i - \delta_{ia}.
\end{multline*}
Thus $v$ is a $\lambda$-subeigenvector for $A$ with deficiency in only the coordinate $a$.
\end{proof}

\begin{lemma}[Pruitt, \cite{Pr}]\label{lem:pruitt}
If $v$ is a $\lambda$-subeigenvector for $A$, then $v_k \geq v_a \cdot \taboo{P}{a}{ka}{}(\lambda^{-1})$ for all $a,k\in S$.
\end{lemma}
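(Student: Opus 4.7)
The plan is to iterate the subeigenvector inequality $v_k \geq \lambda^{-1}\sum_j A_{kj} v_j$ while at each step separating out the contribution coming from paths that have just reached $a$ for the first time. Beginning with $v_k \geq \lambda^{-1} A_{ka} v_a + \lambda^{-1} \sum_{j \neq a} A_{kj} v_j$, the first term corresponds to the length-one first-entrance path $[k,a]$, whose count is $\taboo{p}{a}{ka}{(1)} = A_{ka}$, while the taboo-style second sum is exactly what we need to feed back into the subeigenvector inequality, because the intermediate vertex has been forbidden to equal $a$.

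Concretely, I would prove by induction on $N$ the estimate
\begin{equation*}
v_k \geq v_a \sum_{n=1}^N \lambda^{-n} \taboo{p}{a}{ka}{(n)} + \lambda^{-N} \sum_{\substack{i_1,\ldots,i_N \in S \\ i_1,\ldots,i_N \neq a}} A_{k i_1} A_{i_1 i_2} \cdots A_{i_{N-1} i_N}\, v_{i_N}.
\end{equation*}
The inductive step applies the subeigenvector inequality once more to each $v_{i_N}$ with $i_N \neq a$ and splits $\sum_{i_{N+1}} A_{i_N i_{N+1}} v_{i_{N+1}}$ into the part that lands on $a$ (generating a first-entrance path of length $N+1$) and the part that avoids $a$ (feeding the next remainder term). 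The only combinatorial identity involved is
\begin{equation*}
\sum_{i_1,\ldots,i_N \neq a} A_{k i_1} A_{i_1 i_2} \cdots A_{i_{N-1} i_N} A_{i_N a} = \taboo{p}{a}{ka}{(N+1)},
\end{equation*}
which is literally the definition of first-entrance paths of length $N+1$.

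Since $v$ is nonnegative, the remainder term in the above estimate is nonnegative, so $v_k \geq v_a \sum_{n=1}^N \lambda^{-n} \taboo{p}{a}{ka}{(n)}$ for every $N$. Letting $N \to \infty$ yields $v_k \geq v_a \cdot \taboo{P}{a}{ka}{}(\lambda^{-1})$, as claimed. There is no serious obstacle: the argument succeeds precisely because we are allowed to discard the nonnegative remainder, which morally corresponds to infinite trajectories out of $k$ that never visit $a$; if one insisted on an equality rather than an inequality, these trajectories would have to be controlled, but for the subeigenvector bound they can simply be thrown away.
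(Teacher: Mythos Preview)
The paper does not supply a proof of this lemma; it is simply attributed to Pruitt~\cite{Pr}. Your argument is correct and is essentially the classical one: iterate the subeigenvector inequality while tabooing the state $a$, accumulate the first-entrance contributions $\taboo{p}{a}{ka}{(n)}\lambda^{-n}v_a$, and discard the nonnegative remainder before passing to the limit.
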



\begin{theorem}\label{th:sumsubeig}
If $A$ admits a summable $\lambda$-subeigenvector, then we have $\lambda\geq\exp h_{RevSal}(\Sigma)$.  Conversely, If $\lambda>\exp h_{RevSal}(\Sigma)$, then $A$ admits a summable $\lambda$-subeigenvector with deficiency in only one coordinate.
\end{theorem}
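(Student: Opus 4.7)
The plan is to handle each implication directly, using Proposition~\ref{prop:pruitt} together with Tonelli's theorem for nonnegative double series.

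For the necessity direction, suppose $v$ is a summable $\lambda$-subeigenvector. By induction on $n$, using nonnegativity of $A$, the inequality $Av \leq \lambda v$ upgrades to $A^n v \leq \lambda^n v$ coordinatewise. Summing the $i$-th coordinate over all $i \in S$ and exchanging order of summation yields
\begin{equation*}
\sum_{j \in S} v_j \, p_{\dot j}^{(n)} = \sum_{i,j \in S} (A^n)_{ij} v_j = \sum_{i \in S} (A^n v)_i \leq \lambda^n \|v\|_1.
\end{equation*}
Fixing any $b \in S$, the single-term estimate $v_b \, p_{\dot b}^{(n)} \leq \lambda^n \|v\|_1$ combined with strict positivity $v_b > 0$ (which holds by irreducibility, as noted in the remark after the definition of the transition matrix) gives $p_{\dot b}^{(n)} \leq (\|v\|_1/v_b) \, \lambda^n$. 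Taking $\limsup \frac{1}{n} \log$ of both sides yields $h_{RevSal}(\Sigma) \leq \log \lambda$.

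For the converse, suppose $\lambda > \exp h_{RevSal}(\Sigma)$. Since $p_{\dot b}^{(n)} \geq p_{ab}^{(n)}$ for every $a$, one has $h_{RevSal}(\Sigma) \geq h_{Gur}(\Sigma)$, so also $\lambda > \exp h_{Gur}(\Sigma)$. Proposition~\ref{prop:pruitt} then provides the Pruitt vector $v_k := P_{ka}(\lambda^{-1})$, which is already a $\lambda$-subeigenvector with deficiency in only the single coordinate $a$. Only summability remains: interchanging summations,
\begin{equation*}
\sum_{k \in S} v_k = \sum_{k \in S} \sum_{n=0}^{\infty} p_{ka}^{(n)} \lambda^{-n} = \sum_{n=0}^{\infty} \lambda^{-n} p_{\dot a}^{(n)},
\end{equation*}
which converges because the power series $\sum_n p_{\dot a}^{(n)} z^n$ has radius of convergence $\exp(-h_{RevSal}(\Sigma)) > \lambda^{-1}$. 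No genuine obstacle arises; the substantive work has already been done in Proposition~\ref{prop:pruitt}, and both directions reduce to coupling the reverse path counts $p_{\dot b}^{(n)}$ with $\lambda^n \|v\|_1$. The only subtle point is invoking irreducibility to ensure $v_b > 0$ in the necessity argument.
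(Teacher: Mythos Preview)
Your proof is correct. The converse direction is essentially identical to the paper's: both take the Pruitt vector $v_k=P_{ka}(\lambda^{-1})$ and verify summability by swapping the order of summation to obtain $\sum_n p_{\dot a}^{(n)}\lambda^{-n}$.

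The necessity direction, however, is genuinely different and more elementary than the paper's argument. The paper proceeds by first invoking Proposition~\ref{prop:pruitt} to get $\lambda\geq\exp h_{Gur}(\Sigma)$, then in the nontrivial case $h_{RevSal}(\Sigma)>h_{Gur}(\Sigma)$ applies Pruitt's Lemma~\ref{lem:pruitt} to bound $v_k$ below by $v_a\cdot\taboo{P}{a}{ka}{}(\lambda^{-1})$, sums over $k$ to get $\sum_n\taboo{p}{a}{\dot a}{(n)}\lambda^{-n}<\infty$, and finally appeals to Theorem~\ref{th:sal} (the first-entrance characterization of reverse Salama entropy) to conclude. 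Your route---iterating $A^n v\leq\lambda^n v$, summing rows, and extracting a single term---sidesteps the entire first-entrance machinery and Lemma~\ref{lem:pruitt}, and needs no case split on whether $h_{RevSal}(\Sigma)$ exceeds $h_{Gur}(\Sigma)$. What the paper's detour buys is that Theorem~\ref{th:sal} has some independent interest as a structural statement about countable Markov chains; but for the purpose of proving Theorem~\ref{th:sumsubeig} itself, your argument is shorter and self-contained.
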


\begin{proof}
Suppose $A$ admits a summable $\lambda$-subeigenvector $v$.  By Proposition~\ref{prop:pruitt}, $\lambda\geq h_{Gur}(\Sigma)$.  Suppose that $h_{RevSal}(\Sigma)>h_{Gur}(\Sigma)$. Fix $a\in S$. By Lemma~\ref{lem:pruitt} and the summability of $v$,
\begin{equation*}
v_a \sum_{n=1}^\infty \taboo{p}{a}{\dot a}{(n)} \lambda^{-n} = v_a \sum_{k\neq a} \taboo{P}{a}{ka}{}(\lambda^{-1}) \leq v_a \sum_{k\in S} \taboo{P}{a}{ka}{}(\lambda^{-1}) \leq \sum_{k\in S} v_k < \infty.
\end{equation*}
By Theorem~\ref{th:sal}, the coefficients $\taboo{p}{a}{\dot a}{(n)}$ grow like the reverse Salama entropy. It follows that $\lambda \geq \exp h_{RevSal}(\Sigma)$.

Now suppose $\lambda > \exp h_{RevSal}(\Sigma)$. Define $v$ as in the proof of Proposition~\ref{prop:pruitt}. We need only verify the summability of $v$.  We have
\begin{equation*}
\sum_k v_k = \sum_k \sum_n p_{ka}^{(n)} \lambda^{-n} = \sum_n p_{\dot a}^{(n)} \lambda^{-n}.
\end{equation*}
Convergence follows from the condition on $\lambda$.
\end{proof}

\subsection{The Infimum of Lipschitz Constants}\label{subsec:inf}

We are now ready to state our main results for topologically mixing interval maps admitting countable Markov partitions. In our first theorem we prove that in the class of leo maps from $\CMM$ the infimum $\Lambda(f)$ defined in (\ref{mult:inf}) equals the exponential of the topological entropy.   Moreover, for any map from $\CMM$ we give two characterizations of $\Lambda(f)$ -- one extrinsic, in terms of the associated Markov chain, and the other intrinsic, in terms of the growth rate of the number of preimages under $f$ of a (nearly) arbitrary point from the interval.

Recall that a continuous map $f:[0,1]\to [0,1]$ is called {\em leo} ({\em locally eventually onto}) if for every nonempty open set $U$ there is an $n\in{\mathbb N}$ such that $f^n(U)=[0,1]$.

\begin{theorem}\label{th:leo}Let $f\in\CMM$ with a partition set $P$ be leo. Then
\begin{equation*}
\Lambda(f) = \exp h(f).
\end{equation*}
\end{theorem}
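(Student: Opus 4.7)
The plan has three phases: establish $\Lambda(f) \ge \exp h(f)$ via Corollary~\ref{cor:lipbound}, establish $\Lambda(f) \le \exp h_{RevSal}(\Sigma)$ via the subeigenvector machinery of Section~\ref{subsec:sub}, and finally exploit the leo hypothesis to collapse $h_{RevSal}(\Sigma)$ down to $h_{Gur}(\Sigma) = h(f)$.

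The first two phases are almost immediate. Corollary~\ref{cor:lipbound} gives the lower bound at once. For the upper bound, I would pick $\lambda > \exp h_{RevSal}(\Sigma)$, invoke Theorem~\ref{th:sumsubeig} to produce a summable $\lambda$-subeigenvector for $A(f,P)$ with deficiency in only one coordinate, and then feed this vector into Theorem~\ref{th:subeig_suf} to manufacture a conjugate of $f$ with Lipschitz constant $\lambda$. Sending $\lambda \downarrow \exp h_{RevSal}(\Sigma)$ yields $\Lambda(f) \leq \exp h_{RevSal}(\Sigma)$, and Theorem~\ref{th:Gurnew} rewrites $h_{Gur}(\Sigma) = h(f)$. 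The theorem will follow once we establish the key identity
\[
h_{RevSal}(\Sigma) = h_{Gur}(\Sigma),
\]
whose trivial direction $h_{RevSal} \ge h_{Gur}$ is immediate from the inequality $p_{\dot b}^{(n)} \ge p_{ab}^{(n)}$ in~\eqref{paths}; only $h_{RevSal} \le h_{Gur}$ requires real work, and this is where the leo hypothesis must be spent.

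The third phase will use the dynamical interpretation from Theorem~\ref{th:growthpreimages}: it suffices to exhibit some $x \notin \textnormal{Acc }P$ for which $\limsup_n \tfrac{1}{n} \log \# f^{-n}(\{x\}) \le h(f)$. The approach I have in mind is a Bowen-separation argument: choose $x$ in a partition interval $J$ whose $\eta$-neighborhood meets only two members of $\B(P)$, and show that distinct preimages $z, z' \in f^{-n}(\{x\})$ are $(n+m, \varepsilon)$-separated in the Bowen metric for some $\varepsilon > 0$ and $m$ independent of $n$. Since $z \neq z'$, their itineraries in $\B(P)$ must first disagree at some index $j < n$, and the leo hypothesis will be used to force this symbolic disagreement to propagate to a definite metric gap within a bounded number of further iterates -- concretely, by fixing a finite subfamily $\F \subset \B(P)$ for which leo furnishes a uniform $N$ with $f^N(I) = [0,1]$ for every $I \in \F$, and using the mutual separations of the members of $\F$ as the uniform $\varepsilon$. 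Once set up, $\# f^{-n}(\{x\}) \le s_{n+m}(\varepsilon, f)$, and the standard Bowen--Dinaburg bound $\limsup \tfrac{1}{n} \log s_n(\varepsilon, f) \le h(f)$ closes the loop.

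The principal obstacle is precisely this last step of converting a symbolic separation into a uniform metric one: in the countable Markov setting nothing a priori prevents partition intervals from clustering arbitrarily close to a point of $\textnormal{Acc }P$, so merely saying that $f^j(z)$ and $f^j(z')$ lie in adjacent partition intervals does not bound $d(f^j(z), f^j(z'))$ below, and moreover $f$ is not assumed Lipschitz, so the gap at time $j$ need not grow into a gap at time $j+m$ in a purely metric way. The leo hypothesis is what rescues the argument by allowing us to reduce to a finite, well-separated sub-partition; if this Bowen route turns out to be too delicate to formalize cleanly, a purely combinatorial fallback is to bound the column sum $p_{\dot J}^{(n)}$ directly by decomposing paths via the first-entrance formula of Theorem~\ref{th:sal} and using leo to compare each first-entrance contribution against the Gurevich loop counts $p_{aa}^{(n)}$ up to a subexponential factor.
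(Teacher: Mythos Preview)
Your three-phase plan is sound, and phase~3 can indeed be completed---in fact your combinatorial fallback is a one-liner: fix $J\in\B(P)$, use leo to find $N$ with $f^N(J)=[0,1]$, so that $(A^N)_{JI}\ge1$ for every $I$; then each length-$n$ path ending at $J$ can be injectively prepended by a fixed length-$N$ path from $J$ to its initial vertex, yielding $p_{\dot J}^{(n)}\le p_{JJ}^{(n+N)}$ and hence $h_{RevSal}(\Sigma)\le h_{Gur}(\Sigma)$. (The Bowen route, as you suspect, is genuinely delicate in the countable-partition setting, since the finite subfamily $\F$ need not capture the index $j$ where the itineraries diverge.)

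The paper's proof, however, is shorter and spends the leo hypothesis at a different point. Rather than passing through $h_{RevSal}$ at all, it invokes Proposition~\ref{prop:pruitt} directly for any $\lambda>\exp h_{Gur}(\Sigma)=\exp h(f)$, obtaining a $\lambda$-subeigenvector $v$ with a single deficiency but \emph{no} a~priori summability. Leo then supplies summability for free: choosing any $I\in\B(P)$ and $N$ with $f^N(I)=[0,1]$ gives $(A^N)_{IJ}\ge1$ for all $J$, whence $\sum_J v_J\le(A^N v)_I\le\lambda^N v_I<\infty$. Now Theorem~\ref{th:subeig_suf} applies and the proof is over. So your route first establishes $h_{RevSal}=h_{Gur}$ and then quotes Theorem~\ref{th:sumsubeig}, while the paper bypasses both reverse Salama entropy and Theorem~\ref{th:sumsubeig} entirely. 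The two uses of leo are essentially dual---one exploits a full row of $A^N$, the other a full column---but the paper's packaging avoids the detour through preimage growth.
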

\begin{proof}We know from Corollary \ref{cor:lipbound} that $\Lambda(f)\geq \exp h(f)$. Fix $\lambda>\exp h(f)$. Let $\Sigma$ be the associated topological Markov chain for $f$ with $P$. Then by Theorem \ref{th:Gurnew} also $\lambda>\exp h_{Gur}(\Sigma)$ and from the second part of Proposition \ref{prop:pruitt} we obtain that the transition matrix associted to $f$ admits a $\lambda$-subeigenvector with deficiency in only one coordinate. Since the leo property of $f$ implies that any $\lambda$-subeigenvector is summable, Theorem \ref{th:subeig_suf} ensures that $f$ admits a conjugate map with Lipschitz constant $\lambda$.
\end{proof}

\begin{corollary}\label{co:c1leo}Any $C^1$-map $f\in\CMM$ has to be leo, so by Theorem \ref{th:leo} for such a map the equality $\Lambda(f) = \exp h(f)$ holds true.
\end{corollary}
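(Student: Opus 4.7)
The corollary splits into two pieces: (1) any $C^1$ map $f\in\CMM$ is automatically leo, and (2) leo $\CMM$ maps satisfy $\Lambda(f)=\exp h(f)$. The second piece is exactly Theorem~\ref{th:leo}, so the substantive work is claim (1), which I plan to prove by contradiction.

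Suppose $f\in\CMM$ is $C^1$ but not leo; then there is a nonempty open interval $U\subseteq[0,1]$ with $f^n(U)\neq[0,1]$ for every $n$. Each $f^n(U)$ is a connected subset of $[0,1]$, hence an interval. Topological mixing combined with the intermediate value theorem applied inside this interval forces every interior point $y\in(0,1)$ into $f^n(U)$ for all sufficiently large $n$: choose $\delta=\min(y,1-y)$; by mixing $f^n(U)$ meets both $(y-\delta,y)$ and $(y,y+\delta)$ for $n$ large; and the IVT applied inside the interval $f^n(U)$ then gives $y\in f^n(U)$. Consequently the obstruction to leo concentrates at the endpoints, and after passing to a subsequence we may assume $0\notin f^n(U)$ for every $n$, so that $U\cap E=\emptyset$, where $E:=\bigcup_{n\geq0}f^{-n}(\{0\})$. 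Writing $[a_n,b_n]=\overline{f^n(U)}$, the relation $[a_{n+1},b_{n+1}]=f([a_n,b_n])$ together with the $C^1$ hypothesis and Fermat's theorem places the minimum $a_{n+1}$ either at $a_n$, at $b_n$, or at an interior critical point; a subsequential extraction yields one of the three alternatives $f(0)=0$, $f(1)=0$, or the existence of a critical point $c^*$ with $f'(c^*)=0$ and $f(c^*)=0$.

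To conclude it suffices to show, under each alternative, that $E$ is dense in $[0,1]$, in contradiction with $U\cap E=\emptyset$. The main obstacle is precisely this density assertion: for interior points of $[0,1]$ the IVT+mixing argument above immediately yields density of preimages, but for the boundary point $0$ density can genuinely fail in $\CMM$ without the $C^1$ assumption (cf.\ Example~\ref{ex:gap}), so the $C^1$ hypothesis must do real work here. In the boundary-fixed cases, mixing forces $|f'(0)|\geq 1$ (respectively $|f'(1)|\geq 1$), and an expanding inverse-branch argument combined with irreducibility of the transition graph (Lemma~\ref{lem:refine}) propagates preimages of $0$ into every partition interval, yielding density. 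In the critical-collapse case, the forward invariance $f(\textnormal{Acc }P)\subset\textnormal{Acc }P$ from Lemma~\ref{lem:refine}(iv), together with the continuity of $f'$, is used to rule out the kind of bottleneck that produces the strict inequality $h_{RevSal}>h_{Gur}$ at the heart of Example~\ref{ex:gap}. Once $f$ is established to be leo, Theorem~\ref{th:leo} completes the proof of the corollary.
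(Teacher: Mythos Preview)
Your opening reduction is sound and parallels the paper: if $f$ is mixing but not leo, the obstruction must sit at an endpoint, and after relabeling you may assume $0\notin f^n(U)$ for all $n$, i.e.\ $U\cap E=\emptyset$ where $E=\bigcup_{n\geq0}f^{-n}\{0\}$.  The Fermat--subsequence step extracting the alternatives $f(0)=0$, $f(1)=0$, or $f(c^*)=0$ with $f'(c^*)=0$ is also correct.

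The genuine gap is in the last paragraph, where you try to show $E$ is dense in each alternative.  The heart of the matter is the possibility $f^{-1}\{0\}\subseteq\{0,1\}$, so in particular $f^{-1}\{0\}=\{0\}$ (this is exactly what the paper isolates).  In that situation $E=\{0\}$, and there are \emph{no} preimages of $0$ to feed into any ``expanding inverse-branch argument'': no amount of irreducibility of the transition graph will manufacture preimages that do not exist.  Your phrase ``propagates preimages of $0$ into every partition interval'' is therefore empty in precisely the case that matters.  Likewise, the critical-collapse paragraph invokes Lemma~\ref{lem:refine}(iv) and ``the kind of bottleneck'' from Example~\ref{ex:gap} without any concrete mechanism; nothing there rules out $f^{-1}\{0\}=\{0\}$ either.

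The paper closes this gap with a short fixed-point argument that you are missing.  Once $f^{-1}\{0\}=\{0\}$ (hence $f(0)=0$), either there is a minimal positive fixed point $a$, and then the graph of $f$ on $(0,a)$ lies on one side of the diagonal, producing a proper forward-invariant subinterval ($[0,a]$ or $[b,1]$ with $b=\min f|_{[a,1]}>0$) and contradicting mixing; or fixed points accumulate at $0$, forcing simultaneously $f'(0)=1$ (from the difference quotients at the fixed points) and $f'(0)=0$ (since mixing prevents each $[a_{i+1},a_i]$ from being invariant, so the mean value theorem gives sign changes of $f'$ arbitrarily close to $0$).  This is where the $C^1$ hypothesis actually bites, and it is absent from your sketch.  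Inserting this argument in place of your final paragraph would complete the proof.
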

\begin{proof} Suppose $f:[0,1]\to[0,1]$ is topologically mixing, $C^1$-smooth, but not leo.  It is not possible that both endpoints $0,1$ have backwards orbits which enter $(0,1)$, for otherwise the mixing map $f$ would be leo.  Therefore one of the three sets $\{0\}, \{1\}, \{0,1\}$ is backwards invariant.  Suppose without loss of generality that $f^{-1}\{0\}=\{0\}$ (if $\{1\}$ is backwards invariant, then the situation is symmetric, and if $\{0,1\}$ is a backwards invariant two-cycle, then we replace $f$ by $f^2$).

Suppose first that $f$ has a minimum positive fixed point $a$. Then on the interval $(0,a)$ the graph of $f$ must lie either entirely above or entirely below the diagonal.  If it is below, then $[0,a]$ is forward invariant, contradicting the mixing hypothesis.  If it is above, then let $b$ be the minimum value attained by $f$ on the interval $[a,1]$; but then $0<b\leq a$ and $[b,1]$ is forward invariant, again contradicting the mixing hypothesis.

Now suppose that $f$ has a whole sequence of fixed points $a_i\searrow 0$.  Then $f'(0)=1$.  However, since the interval $[a_{i+1},a_i]$ cannot be forward-invariant, it follows that there is inside this interval a point $x_i$ with $f(x_i)$ outside this interval.  By the mean value theorem, $f'$ takes both positive and negative values on $[a_{i+1},a_i]$.  Since this is for each $i$, continuity of the derivative gives $f'(0)=0$, a contradiction.
\end{proof}

\begin{theorem}\label{th:preimages}Suppose $f\in \CMM$ with a partition set $P$.
  Let $\Sigma$ be the associated topological Markov chain.  Then
\begin{equation*}
\log \Lambda(f) = h_{RevSal}(\Sigma) = \limsup_{n\to\infty} \frac1n \log \# f^{-n}(\{x\}),
\end{equation*}
for any $x\in[0,1]\setminus\textnormal{Acc}(P)$.
\end{theorem}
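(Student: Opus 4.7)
The second equality is precisely the content of Theorem~\ref{th:growthpreimages}, so the work lies in establishing $\log \Lambda(f) = h_{RevSal}(\Sigma)$. The plan is to obtain the two inequalities separately by chaining the results of Subsections~\ref{subsec:sub} and~\ref{subsec:subsal}.

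For the lower bound $\log \Lambda(f) \geq h_{RevSal}(\Sigma)$, I would take any $\lambda$ strictly exceeding $\Lambda(f)$, so that by definition of the infimum there is a homeomorphism $\psi$ for which $\psi \circ f \circ \psi^{-1}$ has Lipschitz constant $\lambda$. Proposition~\ref{prop:subeig_nec} then produces a summable $\lambda$-subeigenvector of the transition matrix $A(f,P)$, and the first part of Theorem~\ref{th:sumsubeig} yields $\lambda \geq \exp h_{RevSal}(\Sigma)$. Letting $\lambda \searrow \Lambda(f)$ gives the desired inequality.

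For the upper bound $\log \Lambda(f) \leq h_{RevSal}(\Sigma)$, I would fix $\lambda > \exp h_{RevSal}(\Sigma)$ and invoke the converse in Theorem~\ref{th:sumsubeig} to obtain a summable $\lambda$-subeigenvector of $A(f,P)$ that is deficient in only one coordinate. Since the deficiency is thus confined to finitely many (indeed, one) coordinates, Theorem~\ref{th:subeig_suf} applies and furnishes a conjugate map with Lipschitz constant $\lambda$. Hence $\Lambda(f) \leq \lambda$, and letting $\lambda \searrow \exp h_{RevSal}(\Sigma)$ completes this direction.

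The two inequalities together give $\Lambda(f) = \exp h_{RevSal}(\Sigma)$, which combined with Theorem~\ref{th:growthpreimages} yields the full chain of equalities. No step here requires new arguments: everything is assembled from the machinery already developed. The only subtlety worth flagging is the ``deficiency in only one coordinate'' hypothesis of Theorem~\ref{th:subeig_suf}, which is exactly what the converse direction of Theorem~\ref{th:sumsubeig} delivers, so the hypotheses match up cleanly.
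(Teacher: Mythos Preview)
Your proposal is correct and follows essentially the same approach as the paper: the second equality is Theorem~\ref{th:growthpreimages}, and the first is obtained by sandwiching $\Lambda(f)$ between the two directions of the chain Proposition~\ref{prop:subeig_nec}/Theorem~\ref{th:subeig_suf} combined with Theorem~\ref{th:sumsubeig}. The paper presents exactly this logic in the form of a diagram rather than prose, but the content is identical.
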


\begin{proof}
The second equality is Theorem~\ref{th:growthpreimages}. The first equality follows from Proposition~\ref{prop:subeig_nec}, Theorem~\ref{th:subeig_suf}, and Theorem~\ref{th:sumsubeig}, as shown in the diagram below:

\noindent\begin{minipage}{\textwidth}
\begin{minipage}[c][9em][c]{.28\textwidth}\centering
\vspace{0.5em}
$\exists$ a conjugate map \\ with Lipschitz \\ constant $\lambda$
\end{minipage}%
\begin{minipage}[c][9em][c]{.08\textwidth}
\vspace{-.8em}
\rotatebox{20}{$\xRightarrow{\text{Pr~\ref{prop:subeig_nec}}}$} \\[1.5em]
\rotatebox{-20}{$\xLeftarrow{\text{Th~\ref{th:subeig_suf}}}$}
\end{minipage}%
\begin{minipage}[c][9em][c]{.28\textwidth}\centering
\vspace{.7em}
$\exists$ a summable $\lambda$-subeigenvector. \\[1.5em]
$\exists$ a summable $\lambda$-subeigenvector,
finitely many deficiencies.\\
\end{minipage}%
\begin{minipage}[c][9em][c]{.08\textwidth}\centering
\vspace{-.5em}
$\xRightarrow{\text{Th~\ref{th:sumsubeig}}}$ \\[3.8em]
$\xLeftarrow{\text{Th~\ref{th:sumsubeig}}}$
\end{minipage}%
\begin{minipage}[c][9em][c]{.28\textwidth}\centering
$\log \lambda \geq$ reverse \\
Salama entropy.\\[2.8em]
$\log \lambda >$ reverse \\
Salama entropy.\\
\end{minipage}%
\end{minipage}

\end{proof}

We cannot help but notice that the class of maps treated in our Theorem~\ref{th:preimages} overlaps in part with the class treated by Parry~\cite{P} (transitive, piecewise monotone, not necessarily Markov), for which $\Lambda(f)=\exp h(f)$ (Parry gave a conjugate map with constant slope, $\lambda = \pm \exp h(f)$).  But this is not a problem, because for the maps Parry considers, topological entropy coincides with the growth rate of the number of preimages of a point.  This is the content of a recent theorem by Misiurewicz and Rodrigues (proved in~\cite{MR} for topologically mixing maps; but a trivial modification of the proof gives the strengthened result for transitive maps.)

\begin{theorem}[\cite{MR}]\label{th:misrod}
Suppose $f:[0,1]\to[0,1]$ is topologically mixing and (finitely) piecewise monotone.  Then $h(f)=\lim_{n\to\infty} \frac1n \log \#f^{-n}(x)$ for any $x\in[0,1]$.
\end{theorem}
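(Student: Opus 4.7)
The plan is to sandwich $\#f^{-n}(x)$ between constant multiples of the lap number $\ell_n := \ell(f^n)$ (the number of maximal intervals of monotonicity of $f^n$), and then invoke the Misiurewicz--Szlenk formula $h(f) = \lim_n \tfrac{1}{n}\log \ell_n$ to conclude.

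The upper bound $\#f^{-n}(x)\le\ell_n+O(1)$ is immediate: $f^n$ is injective on each of its laps, so each lap contributes at most one preimage of $x$, and the finitely many shared endpoints contribute at most a constant. This already gives $\limsup_n \tfrac{1}{n}\log\#f^{-n}(x) \le h(f)$.

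For the lower bound, the first step is to observe that topological mixing together with finite piecewise monotonicity implies leo, and in fact uniform leo in the sense that there exists $N$ with $f^N(I)=[0,1]$ for every lap $I$ of $f$. Given this, I factor $f^n = f^N\circ f^{n-N}$ and use the identity
\begin{equation*}
\#f^{-n}(x) \;=\; \sum_{z\in f^{-N}(x)} \#\{J \text{ lap of } f^{n-N} : z\in f^{n-N}(J)\}
\end{equation*}
to show that $\#f^{-n}(x)$ is comparable to a sum of preimage counts for points $z$ which, by uniform leo, can be chosen to include at least one representative in each lap of $f$. Combining this decomposition with the total variation identity $\int_0^1 \#f^{-n}(y)\,dy = \Var f^n$ (whose right side grows at rate $\exp h(f)$ for piecewise monotone $f$ by Misiurewicz--Szlenk), one can propagate the averaged lower bound to every point $x$.

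The main obstacle is transferring the lower bound from a ``generic'' base point (where the averaged count holds) to an arbitrary $x$ without suffering exponential losses. The robustness tool here is that $\#f^{-n}(\cdot)$ is locally constant off the finite critical-value set of $f^n$, and the uniform leo property ensures that for any open neighborhood of $x$, every lap of $f^n$ whose image is ``long enough'' eventually covers $x$. Making this quantitative -- showing that a definite (subexponentially small) fraction of laps of $f^n$ have images exceeding a fixed threshold, so that each such lap furnishes a preimage of $x$ after at most $N$ further iterations -- is the crux of the argument.
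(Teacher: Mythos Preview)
The paper does not supply its own proof of this theorem; it is quoted as a result of Misiurewicz and Rodrigues~\cite{MR} and stated without proof, so there is no in-paper argument to compare your proposal against.

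As to the proposal on its own merits: the upper bound via lap numbers is correct, and the reduction ``mixing $+$ finitely piecewise monotone $\Rightarrow$ uniform leo with a single constant $N$'' is fine (finitely many laps, each eventually onto, take the maximum). But you explicitly flag the crux --- that a subexponentially small fraction of laps of $f^n$ can have short image, so that after $N$ more iterates each remaining lap covers $x$ --- and then do not carry it out. That estimate is precisely the nontrivial content of the Misiurewicz--Rodrigues argument, and without it the proposal is an outline, not a proof. Two smaller points: the sentence ``can be chosen to include at least one representative in each lap of $f$'' is garbled, since $f^{-N}(x)$ is determined by $x$, not chosen; and the Banach indicatrix identity $\int_0^1 \#f^{-n}(y)\,dy = \Var f^n$ gives only an averaged lower bound, which does not by itself transfer to a pointwise one --- that transfer is exactly the missing ``long-lap'' estimate.
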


As a consequence, we can view our Theorem~\ref{th:preimages} as a natural analogue to Parry's result. 
The penalty we pay for allowing countable Markov partitions is that the limit is replaced by a lim sup, and the point $x$ is no longer completely arbitrary.

\subsection{An Example}\label{subsec:ex}

We reconsider an interval map $f$ from~\cite{MRo}, where it was proven that $f$ is not conjugate to an interval map of constant slope, i.e., piecewise linear with each piece sharing a common absolute value of slope. Since we cannot achieve constant slope, it is natural to search instead for conjugate maps with Lipschitz constants as small as possible.

\begin{example}\label{ex:gap}
Let $F:\R\to\R$ be the piecewise linear map with turning points $z\mapsto z-1$ and $z+\frac35\mapsto z+2$ for $z\in\Z$. Then choose a homeomorphism $h:\R\to(0,1)$ and define $f:=h\circ F\circ h^{-1}$ with additional fixed points at $0,1$. Take $P=h(\Z)\cup h(\Z+\frac35)$. Clearly $f$ is countably Markov with partition set $P$. Moreover, $f$ is topologically mixing as a consequence of  \cite[Theorem~5.4]{MRo}.

Every point of the interval, except the endpoints, has exactly 5 preimages, and so by Theorem~\ref{th:preimages} we see that $\Lambda(f)=5$.

Now consider the transition graph $\Gamma(f,P)$. The vertices are the partition intervals, which we label by the rule $I_n=(h(n),h(n+\frac35))$, $J_n=(h(n+\frac35),h(n+1))$, $n\in\Z$.  Figure~\ref{fig:trans} pictures $f$ (with $P$ superimposed) as well as $\Gamma$.

We can simplify our calculations by noticing that there is a topological conjugacy between the vertex shift of $\Gamma$ and the edge shift (in the terminology of~\cite[\S\S2.2-3]{LM}) of the simpler graph $\Gamma'$ (also pictured in Figure~\ref{fig:trans}) corresponding to the matrix $A'$ with entries $A'_{n,n+1}=A'_{n,n}=2$, $A'_{n,n-1}=1$, $n\in\Z$.  If we use the labeling as shown in Figure~\ref{fig:trans}, then this conjugacy is given explicitly by the formula $\phi(x_0x_1\cdots)=(x_0,n_1-n_0)(x_1,n_2-n_1)\cdots$, where $n_i$ is the integer such that $x_i\in\{I_{n_i},J_{n_i}\}$.

\begin{figure}[htb!]
\begin{minipage}[t][8em][c]{.30\textwidth}\centering
\includegraphics[width=.8\textwidth]{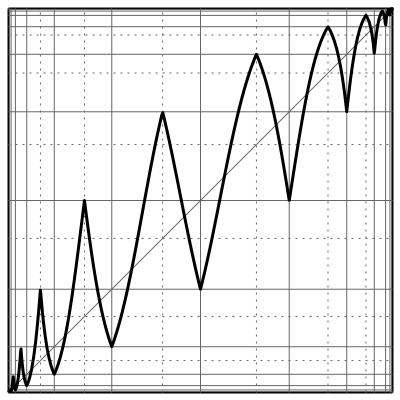}
\end{minipage}%
\begin{minipage}[t][8em][c]{.30\textwidth}\centering
\input{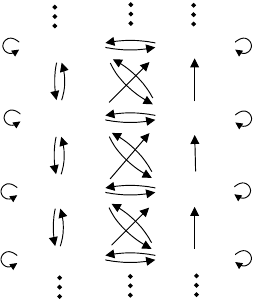tex_t}
\end{minipage}%
\begin{minipage}[t][8em][c]{.20\textwidth}\centering
\input{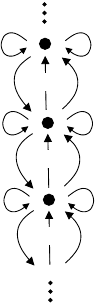tex_t}
\end{minipage}
\label{fig:trans}\caption{The map $f$, its graph $\Gamma$, and a simpler graph $\Gamma'$ whose edge shift is conjugate to the vertex shift of $\Gamma$.}
\end{figure}

Let $\lambda_{A'}=\limsup_{n\to\infty}\left(A'^n\right)_{0,0}$ denote the Perron value of $A'$.  We have
\begin{equation*}
h(f)=h_{Gur}\left(\text{Vertex shift of }\Gamma\right)=h_{Gur}\left(\text{Edge shift of }\Gamma'\right)=\log \lambda_{A'}.
\end{equation*}
The first equality is just Theorem~\ref{th:Gurnew}.  The second equality is because Gurevich entropy is an invariant of topological conjugacy.  The third equality is a general fact about shift spaces represented by nonnegative matrices.

Now we apply a method from~\cite{BB} to compute the Perron value $\lambda_{A'}$.  Since each row of $A'$ has only finitely many nonzero entries, this number is characterized by the property that $A'$ has a nonnegative $\lambda$-eigenvector if and only if $\lambda\geq \lambda_{A'}$ (This is~\cite[Corollary 1]{BB}, a corollary of~\cite[Theorem 2]{Pr}).  The eigenvectors of $A'$ can be found by solving the linear difference equation $v_{i-1}+2v_{i}+2v_{i+1}=\lambda v_i$, whose characteristic polynomial is $m^2+(2-\lambda)m+1=0$.  The difference equation admits a nonnegative solution if and only if the characteristic polynomial has at least one positive real root, which happens if and only if $\lambda\geq2+2\sqrt2$.  Thus $f$ has topological entropy $h(f)=\log(2+2\sqrt2)\approx\log(4.828)$, and there is a gap $\Lambda(f)>\exp h(f)$.
\end{example}

\begin{remark}A two parameter family of (transient) maps from $\CMM$ for which $\Lambda>\exp h$ can be constructed with the help of \cite[Proposition 14(b)]{BB}.
\end{remark}

\section{Appendix}

We finish the proof of Theorem~\ref{th:subeig_suf} by supplying the justification for equation~\eqref{supinf}.

\begin{proof}
Suppose first that $x\notin Q$.  Then each point $f^n(x)$ belongs to a unique partition interval $I_n\in\B(P)$.  (The sequence $I_0 I_1 \cdots$ is usually called the \emph{itinerary} of $x$).  We are interested in the nested sequence of intervals $[I_0I_1\cdots I_n]$, $n\in\N$, each of which contains $x$.  By the density of $Q$, the endpoints of these intervals are converging to $x$.  Therefore,
\begin{equation*}
\inf \psi(Q\cap(x,1]) - \sup \psi(Q\cap[0,x)) = \lim_{n\to\infty} \Delta\psi([I_0\cdots I_n]).
\end{equation*}
The terms in the limit are monotone decreasing by the monotonicity of $\psi$ -- we must show that they decrease to zero.  Suppose first that some symbol $J$ occurs infinitely often in the itinerary of $x$.  If $f(J)$ is a single partition interval, then it also occurs infinitely often in the itinerary of $x$, and we may replace $J$ by $f(J)$.  Since the mixing hypothesis does not allow for a cycle of partition intervals, we may conclude (after making finitely many such replacements) that $f(J)$ contains more than one partition interval.   Among all the partition intervals contained in $f(J)$ there must exist some $L$ with $\frac{v_L}{\lambda_J v_J}$ maximal; call this ratio $c$ and notice that $c<1$.  We have
\begin{equation*}
\Delta\psi([I_0 \cdots J K]) = \frac{v_K}{\lambda_J v_J} \Delta\psi([I_0 \cdots J]) \leq c \Delta\psi([I_0 \cdots J]).
\end{equation*}
Therefore our decreasing sequence shrinks by the factor $c$ or better infinitely many times, and hence converges to zero.  Suppose now that each symbol in the itinerary of $x$ occurs only finitely often.  By hypothesis, we have $\lambda_J=\lambda>1$ for all but finitely many intervals $J\in\B(P)$.  Since these intervals can occur only finitely many times in the itinerary, we find that the denominator in the expression for $\Delta\psi([I_0 \cdots I_n])$ in \eqref{deltapsi} is eventually monotone increasing with respect to $n$, growing by a factor of $\lambda$ at each step.  Moreover, the numerator in this expression is always bounded by $1$.  Therefore the limit is zero, as desired.

Now suppose $x\in Q$.  We will show that $\psi(x)=\inf \psi(Q\cap(x,1])$.  The proof that $\psi(x)=\sup \psi(Q\cap[0,x))$ is similar.  There are two possibilities to consider.  Either for each $n\geq0$ the set $P^n\cap(x,1]$ has a minimum element $x_n$, or else for some $n_0$, the set $P^{n_0}\cap(x,1]$ accumulates at $x$.  In the first case we obtain an ``itinerary'' $I_0I_1\cdots$ defined by the property that for each $n$, $[I_0\cdots I_n]$ is the component $(x,x_n)$ of $[0,1]\setminus P^n$.  Then $\inf \psi(Q\cap(x,1])$ is given by the monotone decreasing limit $\lim \Delta\psi([I_0\cdots I_n])$ and we proceed as before.  In the second case we obtain a whole sequence of points $y_i\in P^{n_0}$ which converge monotonically $y_i \searrow x$.  We may apply the definition \eqref{defpsi} to calculate
\begin{multline*}
\inf \psi(Q\cap(x,1]) - \psi(x) = \lim_{i\to\infty} \psi(y_i)-\psi(x) = \lim_{i\to\infty} \sum_{\substack{[I_0\cdots I_{n_0}]\neq\emptyset \\ x\leq[I_0\cdots I_{n_0}]\leq y_i}} \Delta\psi([I_0\cdots I_{n_0}]) \\
= \lim_{i\to\infty} \sum_{j=i}^\infty \sum_{\substack{[I_0\cdots I_{n_0}]\neq\emptyset \\ y_{j+1} \leq [I_0\cdots I_{n_0}] \leq y_j}} \Delta\psi([I_0\cdots I_{n_0}])=0.
\end{multline*}
The rearrangement of the sum is justified because for each nonempty $[I_0\cdots I_{n_0}]$ between $x$ and $y_i$ there is exactly one $j\geq i$ such that $y_{j+1}\leq [I_0\cdots I_{n_0}]\leq y_j$, and because a convergent series of nonnegative terms may be rearranged at will.  The limit is zero because it is the limit of the tail sums of a convergent series.

\end{proof}

\end{document}